\newcommand{\bq}{\begin{equation}}
\newcommand{\eq}{\end{equation}}
\newcommand{\bqa}{\begin{eqnarray*}}
\newcommand{\eqa}{\end{eqnarray*}}
\theoremstyle{plain}
\newtheorem{theo}{Theorem}[section]
\newtheorem{prop}[theo]{Proposition}
\newtheorem{lemm}[theo]{Lemma}
\newtheorem{coro}[theo]{Corollary}
\newtheorem{defi}[theo]{Definition}
\newtheorem{rema}[theo]{Remark} 
\newtheorem{nota}[theo]{Notation}
\DeclareMathOperator{\di}{div}
\DeclareSymbolFont{pletters}{OT1}{cmr}{m}{sl}
\DeclareMathSymbol{s}{\mathalpha}{pletters}{`s}
\def\eps{\varepsilon}
\def\na{\nabla}
\def\mez{\frac{1}{2}}
\def\Rr{\mathbb{R}}
\def\Nn{\mathbb{N}}
\def\cG{\mathcal{G}}
\def\cF{\mathcal{F}}
\def\cL{\mathcal{L}}
\def\L1{\mathcal{L}^{(1)}}
\def\L2{\mathcal{L}^{(2)}}
\def\L3{\mathcal{L}^{(3)}}
\def\p{\partial}
\def\na{\nabla}
\def\T{\mathbb{T}}
\def\wt{\widetilde}
\def\fd{\mathfrak{d}}
\def\rH{\mathring{H}}
\def\rC{\mathring{C}}
\numberwithin{equation}{section}
\def\om{\omega}
\def\a{\alpha}
\title[Slowly traveling gravity waves for Darcy flow]{Slowly traveling gravity waves  for Darcy flow:\\
existence and stability of large waves}
\date{\today}
\author{John Brownfield} 
\address{
Department of Mathematics\\
University of Maryland\\
College Park, MD 20742, USA
}
\email[J. Brownfield] 
{brownfield.carrington@gmail.com}
\author{Huy Q. Nguyen}
\address{
Department of Mathematics\\
University of Maryland\\
College Park, MD 20742, USA
}
\email[H. Nguyen]{hnguye90@umd.edu}
\begin{document}
\newcommand{\huy}[1]{{\color{orange} \textbf{H:} #1}}
\maketitle
\begin{abstract}
We study surface gravity  waves for viscous  fluid flows governed by Darcy's law. The free boundary is acted upon by an external
pressure posited to be in traveling wave form with a periodic profile. It has been proven that for any given speed,  small external pressures  generate small periodic traveling waves that are asymptotically stable. In this work, we construct a class of slowly traveling waves that are of arbitrary size and asymptotically stable. Our results are valid in all dimensions and for both the finite and infinite depth cases.
\end{abstract}

%

\section{Introduction}
In 1847 Stokes \cite{Stokes} proposed a formal construction of small-amplitude periodic traveling water waves. This pioneering work has lead to the development of an important area in fluid mechanics: traveling surface waves for {\it inviscid} fluids. Spectacular developments have been achieved, from rigorous constructions of small-amplitude waves to that of large-amplitude and extreme waves,  from formal to rigorous stability analysis of traveling waves of various types, to name a few. We refer to \cite{Ebbandflow} for a recent survey on some directions in this vast subject.

The present paper is concerned with traveling surface waves for {\it viscous} fluids, which is much less developed due to the obvious obstruction that energy is dissipated in viscous flows. Nevertheless, experiments \cite{DCDA1, DCDA2, MasnadiDuncan, ParkCho1} have revealed the existence of traveling waves for viscous fluids when they are appropriately {\it forced}. Motivated by these experimental findings,   Leoni-Tice \cite{LeoniTice} constructed for the first time {\it small} traveling waves for the free boundary gravity and capillary-gravity incompressible Navier-Stokes equations forced by a {\it small} bulk force and a {\it small} external stress tensor on the free boundary. The  traveling waves in \cite{LeoniTice} are asymptotically flat at spatial infinity. The construction in \cite{LeoniTice} has been extended in a variety of directions, including periodic and tilted configurations \cite{KoganemaruTice1}, multi-layer configurations \cite{StevensonTice1}, the vanishing wave speed limit \cite{StevensonTice2}, the Navier-slip boundary conditions \cite{KoganemaruTice1}, the compressible Navier-Stokes equations \cite{StevensonTice3}, and the shallow water equations \cite{StevensonTice3}. For Darcy flows, including flows in porous media and vertical Hele-Shaw cells, a similar construction of small traveling waves was obtained in  \cite{NguyenTice}, and it was proven therein that small periodic traveling waves generated by  small external pressures on the free boundary are {\it asymptotically stable}. 

All of the aforementioned viscous free boundary problems admit  the flat free boundary as a trivial solution when no external forces are applied. Small traveling waves corresponding to small external forces were constructed perturbatively via some Implicit Function Theorem argument, including Nash-Moser ones \cite{StevensonTice3, StevensonTice4}. In parallel to the inviscid theory, the following problems remain to be investigated for the viscous theory: 

(I) Construction of large traveling waves  from large external forces. 

(II) Stability of traveling waves.  

In \cite{Nguyen2023-capillary}, by means of a global continuation theory, {\it large periodic} traveling waves for Darcy flows with surface tension were constructed from large external pressures on the free boundary. The constructed waves assume  any specified speed. The stability of large waves in \cite{Nguyen2023-capillary} is yet to be studied, while small waves have been proven to be asymptotically stable \cite{NguyenTice}. The purpose of the present work is to prove the existence of a class of periodic {\it slowly} traveling waves that are simultaneously of {\it arbitrary size} and {\it asymptotically stable}. Our results are valid in all dimensions and for both the finite and infinite depth cases. In order to discuss the main results and ideas, we next describe the setup of the problem.  
    \subsection{Surface waves for Darcy flow with externally applied pressure} 
    We consider surface waves for flows modeled by Darcy's law 
    \bq\label{Darcy}
    \mu u +\na_{x, y}p=-\rho ge_y,\quad \di u=0\quad\text{in }\Omega_{\eta}.
    \eq
    The fluid domain 
    \bq\label{def:Omega}
    \Omega_\eta:=\{(x, y)\in \Rr^d\times \Rr: -b<y<\eta(x, t)\}
    \eq
    lies below the free boundary 
 \bq\label{def:Sigma}
\Sigma_\eta=\{(x, \eta(x, t)): x\in \Rr^d\}
 \eq
 which is the graph of the unknown function $\eta(x, t): \Rr^d\times [0, \infty)\to \Rr$. Here, $\{y=-b\}$, $b>0$, is the fixed bottom. We  also consider  the infinite depth case, i.e. deep fluids,  
 \bq\label{def:Omega:id}
    \Omega_\eta=\{(x, y)\in \Rr^d\times \Rr: y<\eta(x, t)\}.
    \eq
The free boundary evolves according to the kinematic boundary condition 
\bq
\p_t\eta=u\cdot N\vert_{\Sigma_\eta},\quad N:=(-\na_x\eta, 1).
\eq
The fluid is acted upon by an external pressure posited to be in traveling wave form with speed $\gamma$:
\bq\label{epressure}
\psi(x, y, t)=\phi(x- \gamma e_1t),
\eq
 where we have assumed without loss of generality that $e_1$ is the direction of propagation. With \eqref{epressure} we  consider external pressures that are uniform in the vertical direction.

Following \cite{NgPa}, it was shown in \cite{NguyenTice}  that the free boundary problem \eqref{Darcy}-\eqref{epressure} can be reformulated in terms of the Dirichlet-Neumann operator as
\bq\label{eq:eta}
\p_t \eta=-\frac{\rho g}{\mu}G[\eta](\eta+\psi).
\eq
 The Dirichlet-Neumann operator $G[\eta]$ associated to the domain $\Omega_\eta$ is recalled in Definition \ref{def:DN}.  Upon rescaling time we assume henceforth that 
 \[
 \frac{\rho g}{\mu}=1.
 \]  
 In the moving frame of the external pressure \eqref{epressure}, we have $\eta(x,t) = \widetilde\eta(x-\gamma e_1 t, t)$, where  $\widetilde\eta$ satisfies
    \begin{equation} \label{eq:eta}
            \partial_t \widetilde\eta - \gamma \partial_1 \widetilde \eta = - G[\widetilde\eta](\widetilde\eta + \phi).
    \end{equation}
    In what follows, we will drop the tilde over $\eta$ for convenience. A traveling wave  is a time-independent solution $\eta$ of \eqref{eq:eta}, i.e. $\eta$ satisfies 
      \bq\label{eq:tw}
              -\gamma \partial_1 \eta = - G[\eta](\eta + \phi).
              \eq
\subsection{Main results} 
Our first main result - Theorem \ref{theo:existence} below - concerns the existence of {\it large} traveling waves. We observe that for any given $\phi$, 
\bq\label{origin}
(\eta, \gamma)=(-\phi, 0)
\eq
is a solution of \eqref{eq:tw}, provided $-\phi>-b$ in the finite depth. Our idea is to perturb around the {\it large} solution \eqref{origin} to obtain slowly traveling waves. Since $-\phi$ can be arbitrarily large, so can $\eta$; moreover, $\eta$ stays above the bottom in the finite depth case since $-\phi$ does so. \begin{theo}\label{theo:existence}
 Let $k \geq 1$, $\alpha \in (0,1)$, and $\phi \in \rC^{k,\alpha}(\mathbb{T}^d)$. In the finite depth case we assume that $-\phi>-b$. There exists a small number $\delta_0 = \delta_0(\|\phi\|_{C^{k,\alpha}})>0$ such that the following holds. For all $\delta \in (0, \delta_0)$, there exists $\eps = \eps(\delta, \|\phi\|_{C^{k,\alpha}}) > 0$ such that if $|\gamma| < \eps$ then \eqref{eq:tw} has a unique solution $\eta \in \rC^{k, \alpha}(\mathbb{T}^d)$ satisfying $\|\eta+\phi\|_{C^{k, \alpha}} \le  \delta$. Moreover, for any $\delta\in (0, \delta_0)$, the mapping
     \[
         (-\eps, \eps)\ni \gamma \mapsto \eta\in \overline{B}_\delta(-\phi)\subset \rC^{k, \a}(\T^d)
         \]
         is Lipschitz continuous. 
    \end{theo}
In the  second main result - Theorem \ref{theo:stability:intro} below - we identify a class of {\it large} traveling waves that are {\it asymptotically stable} in Sobolev spaces.    \begin{theo}\label{theo:stability:intro}
        Let $\Nn\ni s >1 +\frac{d}{2}$ and $\phi \in H^{s+\mez}(\mathbb{T}^d)$. Assume that $\eta_* \in W^{s+2, \infty} (\mathbb{T}^d)$ is a traveling wave  with speed $\gamma \in \mathbb{R}$.  There exists a nonincreasing function (in each argument) $\om: \Rr_+\times \Rr_+\to \Rr_+$ such that if  
        \bq\label{cd:stability}
       \|\eta_* + \phi\|_{H^{s+\frac12}}<\om\big(\| \eta_*\|_{W^{s+2, \infty}}, \| \phi\|_{H^s}\big)
       \eq
        then $\eta_*$ is asymptotically stable with respect to perturbations in $\rH^s(\T^d)$.
    \end{theo}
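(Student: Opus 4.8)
The plan is to study the perturbation $v := \eta - \eta_*$, which by \eqref{eq:eta} and \eqref{eq:tw} satisfies a quasilinear parabolic equation
\[
\partial_t v - \gamma \partial_1 v = -\big(G[\eta_*+v](\eta_*+v+\phi) - G[\eta_*](\eta_*+\phi)\big).
\]
First I would linearize the Dirichlet--Neumann operator around $\eta_*$: writing the right-hand side as $-G[\eta_*]v + \mathcal R(v)$, where $\mathcal R(v)$ collects the contribution of the shape derivative of $G$ acting on $\eta_*+\phi$ plus higher-order terms in $v$. The key structural point is that $G[\eta_*]$ is a first-order, self-adjoint, nonnegative elliptic operator (a perturbation of $|D|$), so the principal part $\partial_t v - \gamma\partial_1 v + G[\eta_*]v$ is parabolic; the transport term $-\gamma\partial_1 v$ is skew-adjoint and harmless in energy estimates. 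The spectral gap of $G[\eta_*]$ on $\mathring H^s(\mathbb T^d)$ (it annihilates constants but is bounded below by a positive multiple of $|D|$ on the mean-zero sector, and we work on $\mathring H^s$ precisely to have this) provides the dissipation that will close the argument.

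Next I would set up an energy estimate at the $H^s$ level. Applying $\langle D\rangle^s$ and pairing with $\langle D\rangle^s v$, the good term is $\|v\|_{\dot H^{s+1/2}}^2$ coming from $G[\eta_*]$, and one must control the commutator $[\langle D\rangle^s, \text{coefficients of } G[\eta_*]]$ and the remainder $\mathcal R(v)$. The remainder is where the smallness condition \eqref{cd:stability} enters: the shape-derivative term $dG[\eta_*](v)(\eta_*+\phi)$ is roughly a first-order operator in $v$ with coefficients depending on $\eta_*+\phi$, so it is dominated by $\|\eta_*+\phi\|_{H^{s+1/2}}$ times $\|v\|_{\dot H^{s+1/2}}$ — small by hypothesis — plus lower-order terms absorbable by interpolation (Young's inequality) into the dissipation and a $C\|v\|_{H^s}^2$ term. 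The regularity budget explains the assumptions: $\phi\in H^{s+1/2}$ matches the $\dot H^{s+1/2}$ dissipation, while $\eta_*\in W^{s+2,\infty}$ (two derivatives more than $s$, in $L^\infty$) is what is needed to estimate the variable coefficients and their commutators with $\langle D\rangle^s$ without losing regularity. The quadratic-and-higher terms in $v$ are controlled by a bootstrap assumption $\|v(t)\|_{H^s}\le 1$, say, using that $H^s$ is an algebra since $s>1+d/2$. This yields a differential inequality
\[
\frac{d}{dt}\|v\|_{H^s}^2 + c\,\|v\|_{\dot H^{s+1/2}}^2 \le C\big(\|\eta_*+\phi\|_{H^{s+1/2}} + \|v\|_{H^s}\big)\|v\|_{\dot H^{s+1/2}}^2 + C\|v\|_{H^s}^2,
\]
and, choosing $\om(\|\eta_*\|_{W^{s+2,\infty}},\|\phi\|_{H^s})$ small enough that $C\om \le c/4$ and that the initial data is small, the first error term is absorbed.

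To upgrade the resulting a priori bound to \emph{asymptotic} stability, I would use the Poincaré-type inequality $\|v\|_{\dot H^{s+1/2}}\ge \|v\|_{H^s}$ valid on $\mathring H^s(\mathbb T^d)$ (mean-zero functions, where $|D|\ge 1$): combined with the differential inequality this gives, after absorbing errors, $\frac{d}{dt}\|v\|_{H^s}^2 \le -c'\|v\|_{H^s}^2$, hence exponential decay $\|v(t)\|_{H^s}\le e^{-c't/2}\|v(0)\|_{H^s}$. The one subtlety is that $v$ need not be mean-zero a priori; but integrating \eqref{eq:eta} over $\mathbb T^d$ shows $\frac{d}{dt}\int v = -\int (G[\eta]\cdot - G[\eta_*]\cdot)$, and since $\int G[\eta]f = 0$ for any $\eta, f$ (the Dirichlet--Neumann operator has zero average), the mean of $v$ is conserved; so one first subtracts the (constant, small) mean, noting that adding a constant to $\eta$ is compatible with \eqref{eq:tw} only if one stays in the same solution family — more carefully, the conserved mean must be taken to be that of the initial perturbation and one shows it is forced to be zero for a genuine perturbation, or one simply proves decay of $v - \overline v$ and handles $\overline v$ separately. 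Closing this mean-zero reduction cleanly, and making the bootstrap/continuity argument rigorous (local well-posedness in $H^s$ for \eqref{eq:eta}, which should follow from the parabolic smoothing of $G[\eta]$ and is presumably established earlier or is standard), is the main technical obstacle; the energy estimate itself is the heart but is structurally routine once the paralinearization of $G[\eta]$ is in hand.
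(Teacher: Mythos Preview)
Your overall strategy matches the paper's: write the perturbation equation, use coercivity of $G[\eta_*]$ for dissipation, and absorb the dangerous linear-in-$v$ term coming from the shape variation of $G$ via the smallness of $\|\eta_*+\phi\|_{H^{s+1/2}}$. However, your energy inequality as written does not close for \emph{large} $\eta_*$, which is the whole point of the theorem. The issue is the term $+C\|v\|_{H^s}^2$ on the right: this $C$ arises from the commutator $[\langle D\rangle^s, G[\eta_*]]$ and depends on $\|\eta_*\|_{W^{s+2,\infty}}$, while the dissipation constant $c$ in $c\|v\|_{\dot H^{s+1/2}}^2$ comes from the coercivity of $G[\eta_*]$ and is bounded below only in terms of $\|\eta_*\|_{W^{1,\infty}}$. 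For large $\eta_*$ one typically has $C\gg c$, so after the Poincar\'e step you obtain $\frac{d}{dt}\|v\|_{H^s}^2\le (C-c/2)\|v\|_{H^s}^2$ with a positive right-hand side, giving no decay (and in fact no global bound either).

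The fix, carried out in the paper, is to pair the commutator more carefully as $\big|([\,\partial^\alpha,G[\eta_*]]v,\partial^\alpha v)\big|\le C\|v\|_{H^{s+1/2}}\|v\|_{H^{s-1/2}}$ (this uses the sharp one-derivative gain of Theorem~\ref{theo:commutator}; paralinearization alone would give only a half-derivative gain and would not suffice here), then interpolate $\|v\|_{H^{s-1/2}}$ between $\|v\|_{H^{s+1/2}}$ and $\|v\|_{L^2}$ and apply Young to get $\epsilon\|v\|_{H^{s+1/2}}^2+C_\epsilon\|v\|_{L^2}^2$. The residual $C_\epsilon\|v\|_{L^2}^2$ is still large, but now one runs a separate $L^2$ estimate, which involves \emph{no} commutator and hence yields cleanly $\frac{d}{dt}\|v\|_{L^2}^2\le -c_0\|v\|_{L^2}^2+(\text{forcing})$. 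Working with the weighted energy $E=\tfrac{A}{2}\|v\|_{L^2}^2+\tfrac{1}{2}\sum_{|\alpha|=s}\|\partial^\alpha v\|_{L^2}^2$ and choosing $A$ large (depending on $\|\eta_*\|_{W^{s+2,\infty}}$) absorbs the $C_\epsilon\|v\|_{L^2}^2$ and produces $E'\le -c'\|v\|_{H^{s+1/2}}^2+(\text{forcing})$, from which global existence and exponential decay follow. The paper also organizes the existence step differently, via a contraction mapping in $X^s_T=C_tH^s\cap L^2_tH^{s+1/2}$ built on the linear solution operator of Proposition~\ref{prop:g}, rather than an a priori estimate plus black-box local well-posedness; either route is fine once the energy inequality closes.
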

    We refer to Theorem \ref{theo:stability}  for a more precise version of Theorem \ref{theo:stability:intro}. We note that the condition \eqref{cd:stability} imposes a restriction on the size of $\eta_*+\phi$ but not on $\eta_*$, thereby allowing for large $\eta_*$. Theorem \ref{theo:existence} provides an example of a class of large traveling waves satisfying condition \eqref{cd:stability}. Indeed, according to Theorem \ref{theo:existence}, for any given large profile $\phi\in \rC^{s+2, \a}(\T^d)$ of the external pressure, there exists a slowly traveling wave $\eta_*$ provided 
    \[
    \|\eta_*+\phi\|_{C^{s+2, \alpha}}\le \delta<\delta_0=\delta_0(\| \phi\|_{C^{s+2, \a}}).
    \]
     But then $\| \eta_*\|_{W^{s+2, \infty}}< \delta_0+\| \phi\|_{W^{s+2, \infty}}$, and thus \eqref{cd:stability} is satisfied if  
     \[
     \delta<\om(\delta_0+\|\phi \|_{W^{s+2, \infty}}, \| \phi\|_{H^s})
     \]
     which depends only on the given $\phi$. We thus obtain the following corollary on the asymptotic stability of slowly traveling waves.
        \begin{coro}\label{coro}
        Let $\Nn\ni s >1 +\frac{d}{2}$, $\alpha\in (0, 1)$, and $\phi\in \rC^{s+2, \alpha}(\T^d)$. In the finite depth case we assume that $-\phi>-b$. Let $\delta_0 = \delta_0\big(\|\phi\|_{C^{s+2,\alpha}}\big)$ be the constant given in Theorem \ref{theo:existence}, and  let $\om: \Rr_+\times \Rr_+\to \Rr_+$ be the nonincreasing function given in Theorem \ref{theo:stability:intro}. Then any  slowly traveling wave  $\eta$ constructed in Theorem \ref{theo:existence} with  
        \bq
        \| \eta+\phi\|_{C^{s+2, \a}}<\min\Big(\delta_0, \om\big(\delta_0+\|\phi \|_{W^{s+2, \infty}},  \| \phi\|_{H^s}\big)\Big)
        \eq
          is  asymptotically stable with respect to perturbations in  $\rH^s(\T^d)$.
        \end{coro}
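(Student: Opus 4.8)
The plan is to feed the traveling waves produced by Theorem \ref{theo:existence} directly into Theorem \ref{theo:stability:intro}: the corollary is essentially a bookkeeping matter and involves no genuinely new difficulty. Fix $s$, $\alpha$, $\phi$ as in the statement, together with the standing assumption $-\phi>-b$ in the finite depth case, and apply Theorem \ref{theo:existence} with $k=s+2$ (legitimate since $s+2\in\Nn$, $s+2\ge 1$, and $\phi\in\rC^{s+2,\alpha}(\T^d)$). The constructed slowly traveling wave $\eta$ then lies in $\rC^{s+2,\alpha}(\T^d)$, and by hypothesis $\|\eta+\phi\|_{C^{s+2,\alpha}}<\min\big(\delta_0,\om(\delta_0+\|\phi\|_{W^{s+2,\infty}},\|\phi\|_{H^s})\big)$, where $\delta_0=\delta_0(\|\phi\|_{C^{s+2,\alpha}})$ is the constant of Theorem \ref{theo:existence} and $\om$ is the nonincreasing function of Theorem \ref{theo:stability:intro}.

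Next I would record the elementary facts needed to match the hypotheses of Theorem \ref{theo:stability:intro}. On the torus one has the continuous embeddings $\rC^{s+2,\alpha}(\T^d)\hookrightarrow W^{s+2,\infty}(\T^d)$ and $\rC^{s+2,\alpha}(\T^d)\hookrightarrow H^{s+\mez}(\T^d)$, with comparison constants depending only on $s$ and $d$; since $\om$ is merely required to be \emph{some} nonincreasing function, these constants can be absorbed into $\om$ at no cost. In particular $\eta\in W^{s+2,\infty}(\T^d)$ and $\phi\in H^{s+\mez}(\T^d)$, so Theorem \ref{theo:stability:intro} applies in principle to $\eta_*:=\eta$. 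From the triangle inequality and $\|\eta+\phi\|_{C^{s+2,\alpha}}<\delta_0$,
\[
\|\eta\|_{W^{s+2,\infty}}\le\|\eta+\phi\|_{W^{s+2,\infty}}+\|\phi\|_{W^{s+2,\infty}}<\delta_0+\|\phi\|_{W^{s+2,\infty}},
\]
so the monotonicity of $\om$ in its first argument gives $\om\big(\delta_0+\|\phi\|_{W^{s+2,\infty}},\|\phi\|_{H^s}\big)\le\om\big(\|\eta\|_{W^{s+2,\infty}},\|\phi\|_{H^s}\big)$. Combining this with the embedding into $H^{s+\mez}$ and the hypothesis on $\|\eta+\phi\|_{C^{s+2,\alpha}}$ yields
\[
\|\eta+\phi\|_{H^{s+\mez}}\les\|\eta+\phi\|_{C^{s+2,\alpha}}<\om\big(\delta_0+\|\phi\|_{W^{s+2,\infty}},\|\phi\|_{H^s}\big)\le\om\big(\|\eta\|_{W^{s+2,\infty}},\|\phi\|_{H^s}\big),
\]
which (the implicit dimensional constant having been absorbed into $\om$) is precisely condition \eqref{cd:stability} for $\eta_*=\eta$. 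Theorem \ref{theo:stability:intro} then asserts that $\eta$ is asymptotically stable with respect to perturbations in $\rH^s(\T^d)$, as claimed.

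I do not expect any serious obstacle here: the proof is just the chain ``existence $\Rightarrow$ smallness of $\eta+\phi$ $\Rightarrow$ condition \eqref{cd:stability} $\Rightarrow$ stability''. The only two points deserving a moment's attention are (a) consistency of the regularity indices — the wave from Theorem \ref{theo:existence} must sit in $W^{s+2,\infty}(\T^d)$ and $\phi$ in $H^{s+\mez}(\T^d)$, both immediate from $\rC^{s+2,\alpha}(\T^d)$ membership and the choice $k=s+2$ — and (b) checking that the arguments $\delta_0+\|\phi\|_{W^{s+2,\infty}}$ and $\|\phi\|_{H^s}$ fed into $\om$, as well as $\delta_0=\delta_0(\|\phi\|_{C^{s+2,\alpha}})$ itself, depend only on $\phi$, so that the threshold $\min\big(\delta_0,\om(\delta_0+\|\phi\|_{W^{s+2,\infty}},\|\phi\|_{H^s})\big)$ appearing in the corollary is genuinely a function of the given $\phi$ alone and not of the unknown wave $\eta$.
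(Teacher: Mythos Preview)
Your proposal is correct and follows essentially the same route as the paper's own argument (given in the paragraph preceding the corollary): apply Theorem~\ref{theo:existence} with $k=s+2$, use the triangle inequality to bound $\|\eta\|_{W^{s+2,\infty}}<\delta_0+\|\phi\|_{W^{s+2,\infty}}$, invoke the monotonicity of $\om$, and conclude via Theorem~\ref{theo:stability:intro}. You are slightly more explicit than the paper about the embedding $C^{s+2,\alpha}(\T^d)\hookrightarrow H^{s+\mez}(\T^d)$ and about absorbing the associated dimensional constant into $\om$, a point the paper leaves implicit.
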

       
       Corollary \ref{coro} demonstrates a stark contrast between periodic traveling surface waves for inviscid fluids and those for viscous fluids. For the inviscid  water problem, even very small periodic traveling waves (i.e. Stokes waves) are known to be unstable  in Sobolev spaces \cite{BenjaminFeir, BM, NguyenStrauss, ChenSu}.  
       
       The proofs of Theorem \ref{theo:existence} and Theorem \ref{theo:stability:intro} exploit various properties of the Dirichlet-Neumann operator for far-from-flat domains. These include invertibility, contraction, coercivity, and commutator with partial derivatives.  In Section \ref{sec:DN}, we first recall some known results on the Dirichlet-Neumann operator in H\"older and Sobolev spaces, then we prove commutator estimates for the Dirichlet-Neumann operator with partial derivatives. Our commutator estimates are sharp in the sense that they exhibit a full gain  of one derivative; this is a result of independent interest. Section \ref{sec:existence} is devoted to the proof of Theorem \ref{theo:existence}. In Section \ref{sec:lin}, we prove well-posedness of the linear dynamics around large traveling waves. Finally, we prove Theorem \ref{theo:stability:intro} in Section \ref{sec:stability}.
       \begin{nota} We fix the following notation throughout this paper:
       \begin{itemize}
    \item $\mathbb{N}=\{0, 1, 2. \dots\}$.
    \item For $a>0$, $\lceil a\rceil$ denotes the smallest integer greater than or equal to $a$.
    \item $B_r(a)$ denotes the open ball of radius $r$ centered at $a$. $\overline{B}_r(a)$ denotes the closure of $B_r(a)$.
       \item If $X$ is a space of integrable  functions on $\T^d$, we set
       \[
\mathring{X}=\{u\in X: \int_{\T^d}u=0\}.
       \]
       \item $[A, B]:=AB-BA$ is the commutator of the operators $A$ and $B$.
       \end{itemize}
       \end{nota}
\section{The Dirichlet-Neumann operator}\label{sec:DN}
 Throughout this section we assume that in the finite depth case \eqref{def:Omega} the free boundary is separated from the bottom by a positive distance, i.e.
        \bq\label{cd:fd}
        \inf_{x\in \T^d}(\eta(x)+b)\ge \fd>0.
        \eq
    \subsection{Known results}
    \begin{defi}\label{def:DN} Let $\Omega_\eta$ be as in \eqref{def:Omega} (finite depth) or \eqref{def:Omega:id} (infinite depth). The Dirichlet-Neumann operator $G[\eta]$ is defined by 
    \bq
G[\eta]g=\na_{x, y}\psi(x, \eta(x))\cdot N(x),\quad N(x)=(-\na_x\eta(x), 1),
    \eq
    where $\psi$ solves 
    \bq\begin{cases}
\Delta_{x, y}\psi=0\quad\text{in } \Omega_\eta,\\
\psi(x, \eta(x))=g(x),\\
\p_y\psi(x, -b)=0
    \end{cases}
    \eq
    in the finite depth case, or 
      \bq\begin{cases}
\Delta_{x, y}\psi=0\quad\text{in } \Omega_\eta,\\
\psi(x, \eta(x))=g(x),\\
\na \psi\in L^2(\Omega_\eta)
    \end{cases}
    \eq
in the infinite depth case.
    \end{defi}
It is crucial in our construction of large traveling waves that $G[\eta]$ is invertible for large $\eta$. In the scale of H\"older spaces, this invertibility is stated as follows. 
\begin{prop}[\protect{Proposition 2.5, \cite{Nguyen2023-capillary}}]\label{prop:invertG}
        Let $d\ge 1$, $k\geq 1$, $\alpha\in (0, 1)$, and $\eta \in C^{k, \alpha}(\T^d)$. Then there exists a nondecreasing function $C: \Rr_+\to \Rr_+$,  depending only on $(d, b, \fd, k, \a)$, such that the following assertions hold. \\

(i) $G[\eta]: C^{k, \a}(\T^d)\to C^{k, \a}(\T^d)$ and 
    \bq
\|G[\eta]\|_{C^{k, \a}\to C^{k-1, \alpha}} \leq C(\|\eta\|_{C^{k, \alpha}}).
    \eq  
    (ii) $G[\eta]: \mathring{C}^{k, \alpha}(\T^d) \rightarrow \mathring{C}^{k-1, \alpha}(\T^d)$ is an isomorphism and
\bq\label{est:inverseG}
        \| (G[\eta])^{-1}\|_{\rC^{k-1, \alpha}\to \rC^{k, \alpha}}\le C(\| \eta\|_{C^{k, \alpha}}).
        \eq 
    \end{prop}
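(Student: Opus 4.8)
The statement is quoted from \cite{Nguyen2023-capillary}; the natural route is to flatten the fluid domain and invoke Schauder theory for divergence-form elliptic equations, tracking carefully how every constant depends on $\|\eta\|_{C^{k,\alpha}}$, on the ellipticity, and on $(d,b,\fd)$ only.

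\textbf{Flattening.} In the finite depth case I would introduce the diffeomorphism $\Phi:S:=\T^d\times(-b,0)\to\Omega_\eta$, $\Phi(x,z)=(x,\rho(x,z))$, with $\rho$ affine in $z$ and $\rho(x,-b)=-b$, $\rho(x,0)=\eta(x)$; then $\partial_z\rho=\frac{\eta(x)+b}{b}\ge\frac{\fd}{b}>0$ by \eqref{cd:fd}, so $\Phi$ is a bi-Lipschitz diffeomorphism (in the infinite depth case one uses an analogous map onto $\T^d\times(-\infty,0)$ and the estimates below become local near $z=0$, combined with the decay encoded in $\na\psi\in L^2$). Writing $v=\psi\circ\Phi$, the harmonic extension $\psi$ corresponds to a weak solution of
\[
\di_{x,z}\big(A[\eta]\na_{x,z}v\big)=0 \ \text{ in }S,\qquad v|_{z=0}=g,\qquad \big(A[\eta]\na_{x,z}v\cdot e_{d+1}\big)\big|_{z=-b}=0,
\]
where $A[\eta]=|\det D\Phi|\,(D\Phi)^{-1}(D\Phi)^{-T}$ is symmetric, is an explicit algebraic expression in $\na_x\eta$ and $(\eta+b)^{-1}$ — hence $A[\eta]\in C^{k-1,\alpha}(\overline S)$ with $\|A[\eta]\|_{C^{k-1,\alpha}}\le C(\|\eta\|_{C^{k,\alpha}})$ — and uniformly elliptic with ellipticity ratio bounded in terms of $\|\eta\|_{C^1}$ and $\fd$. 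A direct computation (using $\na_x\rho|_{z=0}=\na_x\eta$, $\na_x\rho|_{z=-b}=0$) shows $G[\eta]g=\big(A[\eta]\na_{x,z}v\cdot e_{d+1}\big)|_{z=0}$, so that $\|G[\eta]g\|_{C^{k-1,\alpha}(\T^d)}\le C(\|\eta\|_{C^{k,\alpha}})\|v\|_{C^{k,\alpha}(\overline S)}$ by the algebra property of $C^{k-1,\alpha}$ and the trace theorem; likewise the bottom condition $\partial_y\psi|_{y=-b}=0$ becomes the zero conormal condition above.

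\textbf{Boundedness (i).} The maximum principle gives $\|v\|_{L^\infty(S)}\le\|g\|_{L^\infty}$. Divergence-form Schauder estimates up to the boundary — tangential difference quotients in the periodic variables to control $\na_x$-derivatives, then the equation itself to recover normal derivatives, then iteration on $k$ — yield $\|v\|_{C^{k,\alpha}(\overline S)}\le C(\|A[\eta]\|_{C^{k-1,\alpha}})\big(\|v\|_{L^\infty}+\|g\|_{C^{k,\alpha}}\big)$. Combining the three displays proves assertion (i).

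\textbf{Isomorphism (ii).} Since $\psi\equiv\mathrm{const}$ solves the defining problem, $G[\eta]$ annihilates constants; by the divergence theorem over $\Omega_\eta$ together with both boundary conditions, $\int_{\T^d}G[\eta]g\,dx=0$, so $G[\eta]$ descends to a bounded map $\rC^{k,\alpha}\to\rC^{k-1,\alpha}$. Injectivity: if $G[\eta]g=0$ then $\psi$ is harmonic with vanishing (unnormalized) Neumann data on all of $\partial\Omega_\eta$, so $\int_{\Omega_\eta}|\na_{x,y}\psi|^2=0$, whence $\psi$, and therefore $g$, is constant, i.e. $g=0$ in $\rC^{k,\alpha}$. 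Surjectivity with quantitative inverse: given $h\in\rC^{k-1,\alpha}$, solve the conormal problem $\di(A[\eta]\na v)=0$ in $S$ with flux $h$ (suitably transported) on $\{z=0\}$ and zero flux on $\{z=-b\}$; the solvability condition is exactly $\int_{\T^d}h=0$, so Lax–Milgram on $H^1(S)/\Rr$ produces a solution with $\|v\|_{H^1}\lesssim\|h\|_{L^2}$, De Giorgi–Nash–Moser upgrades this to $\|v\|_{L^\infty}\lesssim\|h\|_{L^2}$, and Schauder for the conormal problem gives $\|v\|_{C^{k,\alpha}(\overline S)}\le C(\|\eta\|_{C^{k,\alpha}})\big(\|v\|_{L^\infty}+\|h\|_{C^{k-1,\alpha}}\big)$. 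Setting $g:=v|_{z=0}$ minus its mean over $\T^d$, one has $g\in\rC^{k,\alpha}$, $G[\eta]g=h$, and $\|g\|_{C^{k,\alpha}}\le C(\|\eta\|_{C^{k,\alpha}})\|h\|_{C^{k-1,\alpha}}$, which is \eqref{est:inverseG}.

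\textbf{Main obstacle.} The one delicate point is the boundary Schauder estimate in the borderline case $k=1$, where $A[\eta]$ is merely $C^{0,\alpha}$ and the equation cannot be differentiated: one must argue directly in divergence form (coefficient freezing, tangential difference quotients, the equation used to control $\partial_z v$), and then verify that the resulting constant depends only on the ellipticity ratio and $\|A[\eta]\|_{C^{0,\alpha}}$, hence — through the explicit formula for $A[\eta]$ — nondecreasingly on $\|\eta\|_{C^{1,\alpha}}$ and on $(d,b,\fd)$ alone. In the infinite-depth case one must additionally splice these now-local estimates near $z=0$ with the $L^2$-decay of $\na\psi$ as $z\to-\infty$.
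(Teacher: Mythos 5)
The paper does not actually prove this proposition --- it is quoted from Proposition 2.5 of \cite{Nguyen2023-capillary} --- so there is no internal proof to compare against; your route (flattening to a divergence-form problem with $C^{k-1,\alpha}$ coefficients, Schauder estimates for (i), and mean-zero range plus energy injectivity plus Lax--Milgram/conormal Schauder for the isomorphism in (ii)) is the standard argument and essentially the one used in the cited reference. I see no genuine gap; the remaining points are routine polish, e.g.\ the $L^\infty$ bound for the Neumann solution obtained by boundary Moser iteration also depends on a norm of the flux $h$ (not just $\|h\|_{L^2}$), and the infinite-depth case needs the homogeneous-space variational set-up (pairing mean-zero $h$ with $\dot H^{1/2}$ traces) that you only sketch.
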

    We will implement a fixed-point method to construct traveling waves. The contraction of the nonlinear map will follow from the following contraction estimate for $G[\cdot]$.
    \begin{prop}[\protect{Proposition 2.7, \cite{Nguyen2023-capillary}}]
      Let $d\ge 1$, $k\geq 1$, $\alpha\in (0, 1)$, and $\eta_1$, $\eta_2 \in C^{k, \alpha}(\T^d)$. Then there exists a  nondecreasing function $\wt{C}: \Rr_+\times \Rr_+\to \Rr_+$, depending only on $(d, b, \fd, k, \a)$, such that  
\bq\label{contraction:est:Holder}\|G[\eta_1]g - G[\eta_2]g\|_{C^{k-1, \alpha}} \leq \widetilde C(\|\eta_1\|_{C^{k, \alpha}},\|\eta_2\|_{C^{k, \alpha}}) \|\eta_1 - \eta_2\|_{C^{k, \alpha}}\|g\|_{C^{k, \alpha}}.\eq
    \end{prop}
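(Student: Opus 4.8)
\textbf{Proof proposal for the contraction estimate \eqref{contraction:est:Holder}.}

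The plan is to prove the estimate by writing the difference $G[\eta_1]g - G[\eta_2]g$ as a boundary trace of $\na_{x,y}(\psi_1 - \psi_2)$, where $\psi_j$ is the harmonic extension of $g$ into $\Omega_{\eta_j}$, and then estimating $\psi_1 - \psi_2$ via an elliptic problem whose source is supported in the difference of the two domains and depends linearly on $\eta_1 - \eta_2$. First I would straighten each free boundary: let $\theta_j : \T^d \times J \to \ol{\Omega_{\eta_j}}$ (with $J = (-b, 0]$ in the finite depth case, or a half-line in the infinite depth case after a further change of variables) be the standard diffeomorphism $\theta_j(x, z) = (x, \varrho_j(x,z))$ interpolating between $\{z=0\} \mapsto \Sigma_{\eta_j}$ and the bottom. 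Pulling the Laplacian back through $\theta_j$ turns the harmonic extension problem into a divergence-form elliptic equation $\di_{x,z}(A_j \na_{x,z} v_j) = 0$ with $v_j = \psi_j \circ \theta_j$, $v_j|_{z=0} = g$, and the conormal/decay condition at the bottom; the coefficient matrix $A_j$ is a smooth (rational) function of $\na_{x,z}\varrho_j$, hence lies in $C^{k-1,\alpha}$ with norm controlled by $\|\eta_j\|_{C^{k,\alpha}}$, and $A_j$ is uniformly elliptic by the separation hypothesis \eqref{cd:fd}. One can moreover arrange $\theta_1$ and $\theta_2$ to use the \emph{same} reference slab, so that $w := v_1 - v_2$ solves
\bq\label{eq:diff-elliptic}
\di_{x,z}(A_1 \na_{x,z} w) = -\di_{x,z}\big((A_1 - A_2)\na_{x,z} v_2\big), \qquad w|_{z=0} = 0,
\eq
with the appropriate homogeneous bottom condition.

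The next step is to quantify the source term. Since $A_j$ depends on $\na_{x,z}\varrho_j$ through a fixed smooth map and $\varrho_1 - \varrho_2$ is a linear expression in $\eta_1 - \eta_2$ (via the interpolation), a direct Faà di Bruno / product-rule computation in H\"older spaces gives
\bq\label{eq:A-diff}
\|A_1 - A_2\|_{C^{k-1,\alpha}} \le \wt C\big(\|\eta_1\|_{C^{k,\alpha}}, \|\eta_2\|_{C^{k,\alpha}}\big)\, \|\eta_1 - \eta_2\|_{C^{k,\alpha}}.
\eq
Here I would invoke the algebra property of $C^{k-1,\alpha}(\T^d\times J)$ and the fact that $1/\det$ of a matrix bounded below is smooth, so the nondecreasing dependence on the norms is exactly as stated. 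Then I would apply Schauder theory for the divergence-form problem \eqref{eq:diff-elliptic}: elliptic regularity up to the (flat, homogeneous) top boundary and the known treatment of the bottom condition — these are precisely the estimates underlying Proposition \ref{prop:invertG}, so I would cite that machinery rather than redo it — yields
\bq
\|\na_{x,z} w\|_{C^{k-1,\alpha}} \le C\big(\|A_1\|_{C^{k-1,\alpha}}\big)\, \|(A_1 - A_2)\na_{x,z} v_2\|_{C^{k-1,\alpha}} \le C(\cdots)\,\|A_1-A_2\|_{C^{k-1,\alpha}}\,\|\na_{x,z} v_2\|_{C^{k-1,\alpha}}.
\eq
Combining with \eqref{eq:A-diff} and the bound $\|\na_{x,z} v_2\|_{C^{k-1,\alpha}} \le C(\|\eta_2\|_{C^{k,\alpha}})\|g\|_{C^{k,\alpha}}$ (again from the solvability estimates behind Proposition \ref{prop:invertG}), one gets control of $\na_{x,z} w$ in $C^{k-1,\alpha}$.

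Finally I would unwind the change of variables. The Dirichlet--Neumann traces satisfy $G[\eta_j]g = \big(\text{a } C^{k-1,\alpha}\text{-bounded operator built from } \na_{x,z}\varrho_j\big)$ applied to $\na_{x,z} v_j$ evaluated at $z = 0$. Writing $G[\eta_1]g - G[\eta_2]g$ as (operator on $\na w|_{z=0}$) plus (difference of operators applied to $\na v_2|_{z=0}$), the first piece is bounded by $\|\na w\|_{C^{k-1,\alpha}}$ and the second by $\|\varrho_1 - \varrho_2\|_{C^{k,\alpha}}\|\na v_2\|_{C^{k-1,\alpha}} \lesssim \|\eta_1 - \eta_2\|_{C^{k,\alpha}}\|g\|_{C^{k,\alpha}}$, with all prefactors nondecreasing in $(\|\eta_1\|_{C^{k,\alpha}}, \|\eta_2\|_{C^{k,\alpha}})$; the trace theorem loses no regularity from the flat boundary $\{z=0\}$. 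This produces \eqref{contraction:est:Holder}. For the infinite depth case the only change is that after truncating the slab at a fixed depth one uses the exponential decay of the harmonic extension together with interior estimates, exactly as in the references; I would handle it in parallel with a remark rather than a separate argument.

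The main obstacle I anticipate is \textbf{bookkeeping the two diffeomorphisms so that the difference problem \eqref{eq:diff-elliptic} has a genuinely flat, homogeneous Dirichlet condition on a common reference domain} — one must be careful that the interpolation map sends $z=0$ to the correct graph for each $j$ while keeping the coefficient difference $A_1 - A_2$ both (a) linear-in-$(\eta_1-\eta_2)$ to leading order and (b) $C^{k-1,\alpha}$-bounded with the stated monotone dependence, which forces a judicious choice of the vertical interpolation profile (e.g. one compactly supported near $z=0$). The elliptic estimate itself and the trace step are standard Schauder theory on a fixed slab, and the algebra-property manipulations are routine; it is the geometry of the reduction that requires the care.
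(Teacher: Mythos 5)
This proposition is not proved in the paper at all: it is imported verbatim from \cite{Nguyen2023-capillary} (Proposition 2.7 there), and the proof in that reference follows essentially the route you sketch — flatten each domain by a graph-type diffeomorphism chosen to be the identity away from the free surface, write the difference of the pulled-back harmonic extensions as a divergence-form problem with source $\di\big((A_1-A_2)\na v_2\big)$ and homogeneous boundary data, bound $\|A_1-A_2\|_{C^{k-1,\alpha}}$ linearly in $\|\eta_1-\eta_2\|_{C^{k,\alpha}}$, and conclude by Schauder estimates together with the trace expression of $G[\eta]$ in flattened coordinates. Your sketch is correct, including the key precaution that the vertical interpolation profile be supported near $z=0$ so that the bottom condition for the difference stays homogeneous.
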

For the proof of the stability of large traveling waves in Sobolev spaces, we will need the following results on the Dirichlet-Neumann operator in Sobolev spaces. Propositions \ref{prop:DN:Sobolevcontinuity} and \ref{prop:DN:Sobolevcontraction} provide a  continuity estimate and contraction estimate, respectively. 
\begin{prop}[\protect{Theorem 3.12, \cite{ABZ3}}]\label{prop:DN:Sobolevcontinuity}
        Let $d\ge 1$, $s > 1+\frac{d}{2}$, and $\eta \in H^s(\T^d)$.  For any $\sigma \in [\mez, s]$, there exists a nondecreasing function $C: \Rr_+\to \Rr_+$ depending only on $(d, b, \fd, s, \sigma)$ such that
                    \bq\|G[\eta]g\|_{H^{\sigma-1}} \leq C(\|\eta\|_{H^s})  \|g\|_{H^\sigma}
                    \eq
                    for all $g\in H^\sigma(\T^d)$.
    \end{prop}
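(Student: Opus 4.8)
The plan is to reduce the bound to an elliptic a priori estimate on a fixed flat domain. First I would flatten the fluid region: choose a diffeomorphism $\rho(x,z)$ from $\T^d\times(-1,0)$ (respectively $\T^d\times(-\infty,0)$ in the infinite depth case) onto $\Omega_\eta$ with $\rho(x,0)=\eta(x)$ and, in the finite depth case, $\rho(x,-1)=-b$; a convenient choice is a suitably regularized change of variables of the Alazard--M\'etivier/Lannes type, for which $\partial_z\rho$ is bounded below by a positive constant depending only on $\fd$ and $\|\eta\|_{H^s}$. Setting $v(x,z)=\psi(x,\rho(x,z))$, the harmonicity of $\psi$ becomes a uniformly elliptic equation, schematically $\partial_z\big(\alpha\,\partial_z v\big)+\Delta_x v+\partial_z(\beta\cdot\nabla_x v)+\beta\cdot\nabla_x\partial_z v=0$, whose coefficients $\alpha,\beta$ are smooth functions of $\nabla_x\rho,\partial_z\rho$ and hence are controlled, in $H^{s-1}$ of the flat domain and in particular in $L^\infty$ (using $s-1>\tfrac d2$), by a nondecreasing function of $\|\eta\|_{H^s}$; moreover $G[\eta]g$ is a linear combination, with such coefficients, of the traces $\partial_z v(\cdot,0)$ and $\nabla_x v(\cdot,0)$.

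Next I would establish the corresponding estimate for $v$. The endpoint $\sigma=\tfrac12$ comes from the variational solution, $\|\nabla_{x,z}v\|_{L^2}\lesssim\|g\|_{H^{1/2}}$, combined with a boundary elliptic estimate near $z=0$ to control the traces in $H^{-1/2}$. For $\sigma\in(\tfrac12,s]$ I would propagate tangential regularity: apply $\langle D_x\rangle^{\sigma-1/2}$ (or tangential difference quotients) to the flattened equation, commute it past the coefficients by tame product and commutator estimates with the low-regularity factor placed in $L^\infty$, and rerun the energy argument to bound $\langle D_x\rangle^{\sigma-1/2}\nabla_{x,z}v$ in $L^2$ and then its trace at $z=0$ in $H^{-1/2}$. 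To recover the \emph{normal} trace to the right order one uses the equation to express $\partial_z^2 v$ in terms of $\Delta_x v$ and lower-order terms, integrating in $z$ from an interior slice; equivalently one localizes near $z=0$, reflects, and invokes interior estimates. Non-integer $\sigma$ is then filled in by interpolation, and at every step the implied constant is kept a nondecreasing function of $\|\eta\|_{H^s}$ and $\fd$. The infinite-depth case is the same once one checks that the flattening turns the condition $\nabla\psi\in L^2(\Omega_\eta)$ into $\nabla_{x,z}v\in L^2(\T^d\times(-\infty,0))$, with no bottom boundary; an exponential weight in $z$ can be inserted to legitimize the integrations by parts.

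The main obstacle is the behaviour at the top boundary $z=0$: interior elliptic estimates do not by themselves control the traces, and with variable coefficients one must produce a genuine boundary estimate for $\partial_z v(\cdot,0)$ of sharp order. The cleanest device is a paradifferential factorization of the elliptic operator — writing it, modulo an operator of order $0$ and a smoothing remainder, as $(\partial_z-T_A)(\partial_z-T_a)$ with $T_a$ of order $1$ and principal symbol $\sqrt{(1+|\nabla\eta|^2)|\xi|^2-(\nabla\eta\cdot\xi)^2}$ — which turns the problem into forward/backward paradifferential transport equations in $z$ with symbol seminorms controlled by $\|\eta\|_{H^s}$; a direct localization-and-reflection argument works as well. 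Everything else is routine once $s>1+\tfrac d2$ is used to put the low-order factors in $L^\infty$; the only real bookkeeping is to verify that all constants depend on $\|\eta\|_{H^s}$, $\fd$ (and the fixed data $d,b,s,\sigma$) monotonically.
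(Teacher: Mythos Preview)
The paper does not supply its own proof of this proposition: it is quoted verbatim as Theorem~3.12 of \cite{ABZ3} and used as a black box. Your outline is precisely the strategy carried out in \cite{ABZ3} (regularized flattening, variational base case, paradifferential factorization $(\partial_z-T_A)(\partial_z-T_a)$ near the boundary to extract the sharp trace estimate), so there is nothing to compare; your sketch is correct and matches the cited source.
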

\begin{prop}\label{prop:DN:Sobolevcontraction}
        Let $d\ge 1$,  $ 1+\frac{d}{2} < s_0 \leq s$, and $\eta_1$, $\eta_2 \in H^s(\T^d)$. There exists a nondecreasing.        function $\widetilde C: \Rr_+\times \Rr_+\to \Rr_+$ depending only on $(d, b, \fd, s, s_0)$ such that     \bq\label{Sobolevcontraction}
        \begin{aligned}
            \|G[\eta_1]g - G[\eta_2]g\|_{H^{s-1}} &\leq \widetilde C(\|\eta_1\|_{H^{s_0}},\|\eta_2\|_{H^{s_0}})\Big\{ \|\eta_1 - \eta_2\|_{H^{s_0}}\|g\|_{H^s} + \|\eta_1 - \eta_2\|_{H^s}\|g\|_{H^{s_0}}\\
            &\hspace{1.6in}+\| \eta_1-\eta_1\|_{H^{s_0}}\|g\|_{H^{s_0}}\big(\| \eta_1\|_{H^s}+\| \eta_2\|_{H^s}\big)\Big\}
        \end{aligned}
        \eq
        for all $g\in H^s(\T^d)$.
    \end{prop}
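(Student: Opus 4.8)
The plan is to transpose to the Sobolev setting the method that underlies the Hölder contraction estimate of \cite{Nguyen2023-capillary} and the $H^s$ continuity estimate of \cite{ABZ3}: flatten both fluid domains onto a common reference strip, turn the two harmonic extensions into solutions of divergence-form elliptic equations, and compare them. It is worth noting at the outset that path-integrating the shape derivative of $G$ does not suffice here: with $\eta_t:=\eta_2+t(\eta_1-\eta_2)$ and $h:=\eta_1-\eta_2$, one has $\frac{d}{dt}G[\eta_t]g=-G[\eta_t](B_th)-\di_x(V_th)$, where $B_t,V_t$ are the traces of $\p_y\phi_t$ and $\na_x\phi_t$; since $B_t$ contains $G[\eta_t]g\in H^{s-1}$, this lies only in $H^{s-2}$ when $g\in H^s$, so the elliptic estimate must be redone for the difference. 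Concretely, for $j\in\{1,2\}$ I would fix a regularized diffeomorphism $\rho_j:\T^d\times I\to\Omega_{\eta_j}$, with $I=(-1,0)$ in finite depth and $I=(-\infty,0)$ in infinite depth, as in \cite{ABZ3}, chosen so that $\rho_1$ and $\rho_2$ coincide near the bottom of $I$; this is legitimate since by \eqref{cd:fd} both domains contain the slab $\T^d\times(-b,-b+\fd)$, and in infinite depth one can moreover take $\mathcal A[\eta_j]-\mathrm{Id}$ supported near $z=0$. Writing $v_j:=\phi_j\circ\rho_j$, where $\phi_j$ is the harmonic extension of $g$ to $\Omega_{\eta_j}$, one gets $\di_{x,z}(\mathcal A[\eta_j]\na_{x,z}v_j)=0$ with $v_j|_{z=0}=g$ and a homogeneous bottom condition, $\mathcal A[\eta]$ being an explicit smooth function of $\na_x\eta$ (and a mollification of $\eta$), while $G[\eta_j]g=\langle\mathcal B[\eta_j],\na_{x,z}v_j|_{z=0}\rangle$ with $\mathcal B[\eta]$ likewise an explicit smooth function of $\na_x\eta$. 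The only input beyond Proposition \ref{prop:DN:Sobolevcontinuity} is its \emph{tame} refinement, also present in \cite{ABZ3}: in the $z$-dependent Sobolev-type norms $X^\mu$ used there, for $s\ge s_0>1+\frac d2$,
\[
\|\na_{x,z}v_j\|_{X^{s-1}}\le C(\|\eta_j\|_{H^{s_0}})\big(\|g\|_{H^s}+\|\eta_j\|_{H^s}\|g\|_{H^{s_0}}\big),
\]
together with its $(s_0,s_0)$ version, and hence the corresponding trace bounds for $\na_{x,z}v_j|_{z=0}$ in $H^{s-1}$ and $H^{s_0-1}$.

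Next I would subtract: $v:=v_1-v_2$ solves $\di_{x,z}(\mathcal A[\eta_1]\na_{x,z}v)=\di_{x,z}\big((\mathcal A[\eta_2]-\mathcal A[\eta_1])\na_{x,z}v_2\big)$ with $v|_{z=0}=0$ and homogeneous bottom condition. Since $\mathcal A$ is smooth, the mean value theorem together with tame composition and product estimates (available because $s_0-1>\frac d2$) gives, with $C=C(\|\eta_1\|_{H^{s_0}},\|\eta_2\|_{H^{s_0}})$,
\[
\|\mathcal A[\eta_1]-\mathcal A[\eta_2]\|_{X^{s-1}}\le C\Big(\|\eta_1-\eta_2\|_{H^s}+\|\eta_1-\eta_2\|_{H^{s_0}}\big(\|\eta_1\|_{H^s}+\|\eta_2\|_{H^s}\big)\Big),\qquad \|\mathcal A[\eta_1]-\mathcal A[\eta_2]\|_{X^{s_0-1}}\le C\|\eta_1-\eta_2\|_{H^{s_0}}.
\]
Feeding these and the elliptic bounds for $v_2$ into a tame product in the $X$-spaces bounds the source term in $X^{s-1}$ by $C$ times precisely the three quantities on the right of \eqref{Sobolevcontraction}; applying the tame elliptic estimate to the $\mathcal A[\eta_1]$-equation then transfers this to $\|\na_{x,z}v\|_{X^{s-1}}$ — the one extra term it produces, of the form $C\,\|\eta_1\|_{H^s}$ times the $X^{s_0-1}$-norm of the source, being again controlled by the third term of \eqref{Sobolevcontraction} — and an $s_0$-level solve yields $\|\na_{x,z}v\|_{X^{s_0-1}}\le C\|\eta_1-\eta_2\|_{H^{s_0}}\|g\|_{H^{s_0}}$.

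Finally I would write $G[\eta_1]g-G[\eta_2]g=\langle\mathcal B[\eta_1],\na_{x,z}v|_{z=0}\rangle+\langle\mathcal B[\eta_1]-\mathcal B[\eta_2],\na_{x,z}v_2|_{z=0}\rangle$. The first bracket, estimated in $H^{s-1}$ by a tame product with multiplier $\mathcal B[\eta_1]$ (again using $s_0-1>\frac d2$) together with the $X^{s-1}$ and $X^{s_0-1}$ bounds for $\na_{x,z}v$, contributes only admissible terms; the second, estimated by a tame product using $\|\mathcal B[\eta_1]-\mathcal B[\eta_2]\|_{H^{s-1}}\le C\big(\|\eta_1-\eta_2\|_{H^s}+\|\eta_1-\eta_2\|_{H^{s_0}}(\|\eta_1\|_{H^s}+\|\eta_2\|_{H^s})\big)$, its $H^{s_0-1}$ analogue $\le C\|\eta_1-\eta_2\|_{H^{s_0}}$, and the trace bounds for $v_2$, does the same. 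Summing the contributions gives \eqref{Sobolevcontraction}. (The $\|\eta_1-\eta_1\|_{H^{s_0}}$ in the displayed inequality is of course a typo for $\|\eta_1-\eta_2\|_{H^{s_0}}$.)

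I expect the main difficulty to be bookkeeping rather than analysis: at every composition, product, and elliptic solve one must verify that the nonlinear prefactor depends only on the $H^{s_0}$ norms and that each $H^s$ norm of $\eta_1$, $\eta_2$, or $\eta_1-\eta_2$ enters at most linearly — in particular that no forbidden $\|\eta\|_{H^s}\,\|\eta_1-\eta_2\|_{H^s}$ cross term slips in. This tame structure is exactly the one already carried by the elliptic, composition, and product estimates of \cite{ABZ3}, so the proposition is in effect a differenced, carefully bookkept version of Proposition \ref{prop:DN:Sobolevcontinuity}. The remaining technical points — the bottom condition (handled by matching the two diffeomorphisms near the bottom) and, in infinite depth, the truncation at $z=-\infty$ — are treated exactly as in \cite{ABZ3}.
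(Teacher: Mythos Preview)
Your proposal is correct and follows essentially the same route the paper indicates: the paper does not give a self-contained proof but simply remarks that the case $s=s_0$ is Proposition~3.31 of \cite{NgPa} and that the tame version follows by combining that argument with the tame elliptic estimates of Proposition~2.12 in \cite{Poyferre}, which is precisely the flatten--subtract--tame-elliptic-solve scheme you outline. The only cosmetic difference is the source you cite for the tame machinery (\cite{ABZ3} rather than \cite{Poyferre}); your sketch is in fact more explicit than the paper's two-line remark.
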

 When $s=s_0>1+\frac{d}{2}$, the estimate \eqref{Sobolevcontraction} was proven in  Proposition 3.31 in \cite{NgPa}. The {\it tame} version \eqref{Sobolevcontraction} can be obtained by combining the proof of Proposition 3.31 in \cite{NgPa} with tame elliptic estimates in Proposition 2.12 in \cite{Poyferre}.
 
    The next proposition provides coercive estimates for $G[\eta]$, which will be crucial in proving decay of perturbations around large traveling waves.
    \begin{prop}[\protect{Propositions 2.2 and 2.3, \cite{Nguyen2023-coercivity}}]
         Let $\eta\in W^{1, \infty}(\T^d)$, $d\ge 1$. There exists a positive constant $C=C(d)>0$ such that 
         \bq\label{coercive}
(G[\eta]g, g)_{H^{-\mez}(\T^d), H^\mez(\T^d)}\ge M\| g\|_{\dot H^\mez}^2
         \eq
         for all $g\in H^\mez(\T^d)$, where 
         \bq
M=\begin{cases}
\frac{C\fd}{1+\| \na \eta\|_{L^\infty}^2+\|\eta+b\|_{W^{1, \infty}}^2}\quad\text{if } \Omega_\eta~\text{is } \eqref{def:Omega}\\
\frac{C}{1+\| \na \eta\|_{L^\infty}}\quad\text{if } \Omega_\eta~\text{is } \eqref{def:Omega:id}
\end{cases}
         \eq
         and
\bq
\| g\|_{\dot H^\mez(\T^d)}^2:=\sum_{k \in \mathbb{Z}^d\setminus \{0\}}|k|^2|\widehat{g}(k)|^2.
\eq
           \end{prop}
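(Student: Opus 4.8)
The plan is to combine the classical energy identity for $G[\eta]$ with a change of variables flattening $\Omega_\eta$ onto a fixed unit‑height reference slab, which reduces \eqref{coercive} to the coercivity of a constant‑coefficient uniformly elliptic operator, the latter being explicit. To set up the energy identity, let $\psi$ be the extension of $g$ from Definition \ref{def:DN}. By Green's formula --- which, since $g$ lies only in $H^{\mez}$, is to be read through the variational formulation of $G[\eta]$, the hypothesis $\eta\in W^{1,\infty}(\T^d)$ being exactly what makes the trace map and this formula legitimate --- one has
\[
(G[\eta]g,g)_{H^{-\mez},H^{\mez}}=\int_{\Sigma_\eta}\psi\,\p_\nu\psi\,\d S=\iint_{\Omega_\eta}|\nabla_{x,y}\psi|^2\,\d x\,\d y,
\]
the bottom boundary contributing nothing because of the Neumann condition $\p_y\psi(\cdot,-b)=0$ in the finite depth case and because $\nabla\psi\in L^2(\Omega_\eta)$ in the infinite depth case. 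It thus remains to bound the Dirichlet energy of $\psi$ from below by $M\|g\|_{\dot H^{\mez}}^2$.

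Next I would flatten the domain. In the finite depth case put $S=\T^d\times(-1,0)$ and $\rho(x,z)=\big(x,\,(z+1)(\eta(x)+b)-b\big)$, a bi‑Lipschitz diffeomorphism of $S$ onto $\Omega_\eta$ whose vertical Jacobian is $\eta(x)+b\ge\fd>0$; in the infinite depth case put $S=\T^d\times(-\infty,0)$ and $\rho(x,z)=(x,z+\eta(x))$. With $\wt\psi:=\psi\circ\rho$, which inherits from $\psi$ a square‑integrable gradient on $S$, the change of variables gives
\[
\iint_{\Omega_\eta}|\nabla_{x,y}\psi|^2\,\d x\,\d y=\iint_{S}\big(A\,\nabla_{x,z}\wt\psi\big)\cdot\nabla_{x,z}\wt\psi\,\d x\,\d z,\qquad A:=(\det D\rho)\,(D\rho^{\top}D\rho)^{-1},
\]
with $A$ symmetric and uniformly elliptic. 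Since $\det D\rho\ge\fd$ and $\|D\rho\|^2\le C(d)\big(1+\|\na\eta\|_{L^\infty}^2+\|\eta+b\|_{L^\infty}^2\big)$ in finite depth (respectively $\det D\rho=1$ and $\|D\rho\|^2\le C(d)\big(1+\|\na\eta\|_{L^\infty}^2\big)$ in infinite depth), the least eigenvalue of $A$ is bounded below by $c(d)\,M$ with $M$ as in the statement. Hence $(G[\eta]g,g)_{H^{-\mez},H^{\mez}}\ge c(d)\,M\iint_{S}|\nabla_{x,z}\wt\psi|^2\,\d x\,\d z$.

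Finally, since $\rho$ restricts to the identity on $\{z=0\}$, the trace of $\wt\psi$ there equals $g$, so $\iint_S|\nabla_{x,z}\wt\psi|^2$ is no smaller than the minimal Dirichlet energy among finite‑energy extensions of $g$ to $S$; applying the energy identity of the first step to the \emph{flat} slab, that minimum equals $(G_0g,g)$ with $G_0=|D|\tanh(|D|)$ in the finite depth case and $G_0=|D|$ in the infinite depth case. In the finite depth case, using $|k|\ge1$ for $k\in\Zz^d\setminus\{0\}$ and the monotonicity of $\tanh$,
\[
\iint_S|\nabla_{x,z}\wt\psi|^2\,\d x\,\d z\ \ge\ \sum_{k\in\Zz^d\setminus\{0\}}|k|\tanh(|k|)\,|\wh g(k)|^2\ \ge\ \tanh(1)\,\|g\|_{\dot H^{\mez}}^2,
\]
while in the infinite depth case $\iint_S|\nabla\wt\psi|^2\ge\|g\|_{\dot H^{\mez}}^2$. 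Chaining the three steps yields \eqref{coercive} with $M$ as stated, the absolute constant $\tanh(1)$ being absorbed into $c(d)$; flattening onto the unit slab rather than onto $\T^d\times(-b,0)$ is what prevents any hidden dependence on $b$.

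I expect the only non‑routine point to be the ellipticity estimate of the second step, and in particular its sharpness. The naive pointwise bound above gives, in the infinite depth case, $M\sim(1+\|\na\eta\|_{L^\infty}^2)^{-1}$, whereas the stated $M\sim(1+\|\na\eta\|_{L^\infty})^{-1}$ is strictly stronger; recovering it requires either a more careful flattening or propagating the anisotropic form $(Aw,w)=|w'-w_z\na\eta|^2+w_z^2$ through the comparison with the flat operator instead of immediately replacing it by $c(d)\,M\,|w|^2$. In the finite depth case the diffeomorphism $\rho$ degenerates as $\fd\to0$, which is precisely the source of the factor $\fd$ in $M$. Everything else is standard elliptic theory together with Fourier analysis on $\T^d$.
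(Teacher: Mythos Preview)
The paper does not give its own proof of this proposition; it is quoted verbatim from \cite{Nguyen2023-coercivity}. Your three-step argument (energy identity, flattening, comparison with the explicit flat Dirichlet--Neumann operator) is the standard route and is correct as written for the finite depth case: the Jacobian bound $\det D\rho\ge\fd$ together with $\|D\rho\|^2\le C(d)\big(1+\|\na\eta\|_{L^\infty}^2+\|\eta+b\|_{L^\infty}^2\big)$ reproduces exactly the stated $M$. Your observation that $\wt\psi$ is merely \emph{some} finite-energy extension of $g$ to the reference slab, hence has Dirichlet energy bounded below by that of the flat harmonic extension, is the key point in Step~3 and is correctly argued (in finite depth one should note that $\p_z\wt\psi\vert_{z=-1}=(\eta+b)\,\p_y\psi\vert_{y=-b}=0$, so the Neumann class is preserved and the flat minimizer is indeed $|D|\tanh|D|$).

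The genuine gap you flag in the infinite depth case is real, and your proposed repairs do not close it. The pointwise ellipticity bound on $A=(D\rho^\top D\rho)^{-1}$ for the translation flattening $\rho(x,z)=(x,z+\eta(x))$ is \emph{exactly} of order $(1+|\na\eta|^2)^{-1}$: the $2\times 2$ block in the $(\na\eta/|\na\eta|,\p_z)$ plane has largest eigenvalue $\sim |\na\eta|^2$. Writing the quadratic form as $|w'-w_z\na\eta|^2+w_z^2$ does not help, because any inequality of the type $|w'-w_z\na\eta|^2+w_z^2\ge c(1+|\na\eta|)^{-1}|w|^2$ is a pointwise ellipticity statement and therefore cannot beat the eigenvalue computation. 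Thus the sharper constant $C/(1+\|\na\eta\|_{L^\infty})$ cannot be obtained by your scheme of first replacing $A$ by a scalar lower bound and then comparing $\int_S|\na\wt\psi|^2$ to the flat minimum; one needs an argument that does not pass through a pointwise lower bound on $A$. For the purposes of \emph{this} paper the discrepancy is harmless: the only place \eqref{coercive} is invoked is \eqref{coercive:2}, where merely $c_0=c_0(\|\eta_*\|_{W^{1,\infty}})>0$ is required, and your argument delivers that with room to spare.
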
 
\subsection{Commutator estimates}
We establish commutator estimates for $[\p^\alpha, G[\eta]]$, where   $\alpha\in \mathbb{N}^d$ is any multiindex. Since $G[\eta]$ is a first-order operator, one expects that the commutator $[\p^\alpha, G[\eta]]$ is of order $|\alpha|$, provided $\eta$ is sufficiently smooth. This will be  proven in Theorem \ref{theo:commutator}, the proof of which requires the following lemma.
\begin{lemm}\label{lemm:eestloc} 
We distinguish the finite versus the infinite depth cases.

(i) In the finite depth case \eqref{def:Omega} we consider  the boundary-value problem
\bq\label{eqv:finite}
\begin{cases}
\Delta v=G\quad\text{in } \Omega_{\eta}, \\
v(x, \eta(x))=0,\\
\p_yv(x, -b)=g_b.
\end{cases}
\eq
Let $h>0$ be sufficiently small  such that 
\bq\label{Omegah}
\Omega^h:=\{(x, y)\in \T^d\times \Rr: \eta(x)-h<y<\eta(x)\}\subset \Omega_{\eta}.
\eq
For any $r \in [0,\infty)$, there exists $C: \Rr_+\to \Rr_+$ depending only on $(d, b, \fd, r, h)$ such that 
        \begin{align}\label{eest:loc}
            \|v\|_{H^{r+2}(\Omega^{\frac{h}{2}})} \leq C(\|\eta\|_{W^{\lceil r+2\rceil, \infty}})\left\{\|G\|_{H^r(\Omega^h)} + \|G\|_{L^2(\Omega)} + \|g_b\|_{H_x^{-\frac12}}\right\}
        \end{align}
        provided the right-hand side is finite. 
        
(ii) In the infinite depth case \eqref{def:Omega:id} we consider the problem 
\bq
\begin{cases}
\Delta v=G\quad\text{in } \Omega_{\eta}, \\
v(x, \eta(x))=0,\\
\na v\in L^2(\Omega)
\end{cases}
\eq
For any $r\in [0, \infty)$ and $h>0$, there exists $C: \Rr_+\to \Rr_+$ depending only on $(d, r, h)$ such that
\begin{align}\label{eest:loc:id}
            \|v\|_{H^{r+2}(\Omega^{\frac{h}{2}})} \leq C(\|\eta\|_{W^{\lceil r+2\rceil, \infty}})\left\{\|G\|_{H^r(\Omega^h)} + \|G\|_{L^2(\Omega)} \right\}
        \end{align}
        provided the right-hand side is finite.
\end{lemm}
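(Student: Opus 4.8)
\textbf{Proof plan for Lemma \ref{lemm:eestloc}.}

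The plan is to treat both cases by the same interior/up-to-the-top-boundary elliptic regularity argument, flattening the free boundary and applying a bootstrap in Sobolev scale on nested collar neighborhoods of $\Sigma_\eta$. First I would straighten the domain near the top: since $\Omega^h \subset \Omega_\eta$, introduce the change of variables $(x,y) \mapsto (x, z)$ with $z = y - \eta(x)$, which maps $\Omega^h$ onto the flat slab $\T^d \times (-h, 0)$ and $\Sigma_\eta$ onto $\{z=0\}$. Under this map $\Delta_{x,y}$ becomes a second-order elliptic operator $\mathcal{A}$ with coefficients depending on $\na_x\eta$ (and its $x$-derivatives after differentiating the equation), which are bounded and elliptic because $\eta \in W^{\lceil r+2\rceil,\infty}$; the Dirichlet condition $v=0$ persists on $\{z=0\}$. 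The transported function $\tilde v$ satisfies $\mathcal A \tilde v = \tilde G$ on the slab with $\tilde v|_{z=0}=0$, where $\|\tilde G\|_{H^r(\text{slab of height }h)} \lesssim C(\|\eta\|_{W^{\lceil r+2\rceil,\infty}})\|G\|_{H^r(\Omega^h)}$ and likewise in $L^2$.

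Next I would run the standard elliptic estimate for a second-order equation on a slab with Dirichlet data on one face and \emph{no} boundary condition on the other: choose cutoffs $1 = \chi_0 \ge \chi_1 \ge \dots$ supported in shrinking collars $\{-h_j < z < 0\}$ with $h_0 = h > h_1 > \dots > h_N = h/2$, and prove by induction on $m = 0, 1, \dots, \lceil r+2 \rceil$ that $\|\chi_m \tilde v\|_{H^{m+1}} \lesssim C(\|\eta\|_{W^{\lceil r+2\rceil,\infty}})(\|\tilde G\|_{H^{m-1}(\text{collar})} + \text{lower order})$. Tangential ($x$) derivatives are handled by the difference-quotient / commutator method (differentiating $\mathcal A \tilde v = \tilde G$ in $x$, using that $\chi_m \p_x^\beta \tilde v$ still vanishes on $\{z=0\}$), and the single normal derivative that tangential regularity does not reach is recovered algebraically by solving $\mathcal A \tilde v = \tilde G$ for $\p_z^2 \tilde v$ in terms of $\tilde G$ and the already-controlled tangential-and-mixed derivatives. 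Iterating to $m = \lceil r+2 \rceil$ and then interpolating (or using the fractional version of the difference-quotient argument) yields $\|\tilde v\|_{H^{r+2}(\T^d\times(-h/2,0))} \lesssim C(\dots)(\|\tilde G\|_{H^r} + \|\tilde G\|_{L^2})$. The only place the global $L^2(\Omega)$ norm of $G$ and the boundary datum $g_b$ enter is through the \emph{zeroth-order} term in the induction base case: to control $\|\tilde v\|_{L^2}$ (equivalently $\|v\|_{L^2(\Omega^h)}$) one cannot use only local information, so I would invoke the global energy estimate for the full boundary-value problem \eqref{eqv:finite} — multiply by $v$, integrate by parts over $\Omega_\eta$, use the homogeneous Dirichlet condition on $\Sigma_\eta$ and the Neumann datum $g_b$ on $\{y=-b\}$, and a Poincaré-type inequality valid because $v$ vanishes on $\Sigma_\eta$ — to get $\|\na v\|_{L^2(\Omega)} \lesssim \|G\|_{L^2(\Omega)} + \|g_b\|_{H^{-1/2}_x}$, hence $\|v\|_{L^2(\Omega^h)} \lesssim$ the same. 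In the infinite-depth case the same energy estimate works with $g_b$ absent, using $\na v \in L^2(\Omega)$ in place of the bottom condition, which accounts for the two forms \eqref{eest:loc} and \eqref{eest:loc:id}.

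I expect the main obstacle to be bookkeeping the dependence of the constant on $\|\eta\|_{W^{\lceil r+2 \rceil,\infty}}$ through the iteration: each tangential differentiation of the flattened equation produces commutator terms with coefficients built from $\p_x^j \eta$ up to $j = m \le \lceil r+2\rceil$, and when $r$ is non-integer one must be careful that the product estimates $\|ab\|_{H^r} \lesssim \|a\|_{W^{\lceil r\rceil,\infty}}\|b\|_{H^r} + \dots$ close without losing derivatives — this is exactly where the ceiling $\lceil r+2 \rceil$ in the statement is forced. A secondary subtlety is that the collar $\Omega^h$ is genuinely a subset of $\Omega_\eta$ only for $h$ small depending on a lower bound for $\eta + b$ (i.e. on $\fd$) in the finite-depth case, which is why $\fd$ appears among the parameters the constant is allowed to depend on; in the infinite-depth case no such restriction is needed and $\fd$ drops out, matching the stated dependencies.
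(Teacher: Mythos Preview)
Your proposal is correct and follows the same strategy as the paper: a global variational estimate gives $\|v\|_{H^1(\Omega)} \lesssim \|G\|_{L^2(\Omega)} + \|g_b\|_{H^{-1/2}_x}$ (with the $g_b$ term absent in infinite depth, using Poincar\'e in the collar since $v|_{\Sigma_\eta}=0$), and then local elliptic regularity up to the Dirichlet boundary $\Sigma_\eta$ upgrades this to $H^{r+2}$ on $\Omega^{h/2}$. The paper organizes the second step slightly differently --- it invokes a Dirichlet elliptic estimate on $\Omega^{h/2}$ with bottom datum $v|_{y=\eta-h/2}$ and then controls that trace via a standard interior estimate on $\{\eta-h<y<\eta-\tfrac{h}{4}\}$ --- but this is just a more black-box presentation of your explicit flatten-and-bootstrap with nested cutoffs.
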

        \begin{proof}
(i) Finite depth.  We have the variational estimate for the Neumann problem \eqref{eqv:finite}:      \begin{align} \label{variest:Neumann}
                \|v\|_{H^1(\Omega)} \leq C(\|\eta\|_{W^{1,\infty}})(\|g_b\|_{H^{-\frac12}_x} + \|G\|_{L^2(\Omega)}).
            \end{align}
See Theorem III.4.3 in \cite{BoyerFabrie}.  On the other hand, elliptic estimates for the Dirichlet problem give
   \bq\label{estv:top}
            \begin{aligned}
                \|v\|_{H^{r+2}(\Omega^{\frac{h}{2}})} &\leq C(\|\eta\|_{W^{\lceil r+2\rceil, \infty}})\left \{ \|G\|_{H^r(\Omega^{\frac{h}{2}})} + \|v |_{y=\eta-\frac{h}{2}}\|_{H^{r+\frac32}_x} \right \} \\
                &\leq C(\|\eta\|_{W^{\lceil r+2\rceil, \infty}})\left \{ \|G\|_{H^r(\Omega^{\frac{h}{2}})} + \|v \|_{H^{r+2}(\{\eta-h<y<\eta-\frac{h}{4}\})} \right \},
            \end{aligned}
            \eq
            where we have used the trace inequality in the second inequality. By a standard localization argument and \eqref{variest:Neumann}, we obtain 
            \bq\label{estv:loc}
            \begin{aligned}
                \|v\|_{H^{r+2}(\{\eta-h<y<\eta-\frac{h}{4}\})} &\leq C(\|\eta\|_{W^{1, \infty}}) \left \{ \|G\|_{H^r(\Omega^h)} + \|v \|_{H^{1}(\Omega^h)} \right \}\\
                &\leq C(\|\eta\|_{W^{1, \infty}}) \left \{ \|G\|_{H^r(\Omega^h)}+ \|G\|_{L^2(\Omega)} + \| g_b\|_{H^{-\mez}_x} \right \}.
            \end{aligned}
            \eq
     Substituting  \eqref{estv:loc} in \eqref{estv:top} yields the desired estimate \eqref{eest:loc}.

    (ii) Infinite depth. In this case we have 
    \[
     \|\na v\|_{L^2(\Omega)} \leq C(\|\eta\|_{W^{1,\infty}}) \|G\|_{L^2(\Omega)}.
    \]
    Moreover, since $v(x, \eta(x))=0$, $v$ satisfies Poincare's inequality in the strip $\Omega^h$. Consequently, the $H^{r+2}$  estimate \eqref{estv:top} and the interior estimate \eqref{estv:loc} hold without the term $\| g_b\|_{H^{-\mez}_x}$. Therefore, we obtain \eqref{eest:loc:id}.
        \end{proof}
    \begin{theo}\label{theo:commutator}
        Let $\alpha\in \mathbb{N}^d$. For $\Rr\ni \sigma \geq \frac{1}{2}$, there exists $C:\Rr_+\to \Rr_+$ depending only on $(d, b, \fd, \sigma, |\alpha|)$ such that the commutator $[\partial^\alpha, G[\eta]]:= \partial^\alpha G[\eta]  -  G[\eta]\partial^\alpha $ satisfies
\bq\label{commutator:est}
            \|[\partial^\alpha, G[\eta]]f\|_{H^\sigma(\T^d)} \leq C(\|\eta\|_{W^{\lceil\sigma+|\alpha|+\frac32\rceil, \infty}(\T^d)})\|f\|_{H^{\sigma+|\alpha|}(\T^d)}
        \eq
        provided the right-hand side is finite.
    \end{theo}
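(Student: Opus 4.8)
The plan is to establish \eqref{commutator:est} first for $|\alpha|=1$ and then pass to general $\alpha$ by induction on $|\alpha|$, using the elementary operator identity
\[
[\p^\alpha,G[\eta]]=\p_j\,[\p^{\alpha'},G[\eta]]+[\p_j,G[\eta]]\,\p^{\alpha'},\qquad \alpha=\alpha'+e_j .
\]
Applying the inductive hypothesis with $\sigma$ replaced by $\sigma+1$ to the first summand and bounding $\|\p_j u\|_{H^\sigma}\le\|u\|_{H^{\sigma+1}}$, then the base case to the second summand while noting $\|\p^{\alpha'}f\|_{H^{\sigma+1}}\le\|f\|_{H^{\sigma+|\alpha|}}$ and $\lceil(\sigma+1)+|\alpha'|+\tdm\rceil=\lceil\sigma+|\alpha|+\tdm\rceil$, closes the induction with no loss of derivatives and with every coefficient routed through the $W^{\lceil\sigma+|\alpha|+\tdm\rceil,\infty}$ norm of $\eta$. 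Thus everything reduces to $\alpha=e_j$.

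For $\alpha=e_j$ we may assume $\eta,f$ smooth by density. I would flatten the domain: in finite depth realize $\Om_\eta$ as the image of $\T^d\times(-1,0)$ under $(x,z)\mapsto(x,\vr(x,z))$ with $\vr(x,z)=(1+z)\eta(x)+bz$, so that $\vr|_{z=0}=\eta$, $\vr|_{z=-1}=-b$, $\p_z\vr=\eta+b\ge\fd>0$ by \eqref{cd:fd}, and — crucially — $\na_x\vr=(1+z)\na_x\eta$ vanishes at $z=-1$. If $\psi$ is the harmonic extension of $f$ (Definition \ref{def:DN}) and $v(x,z):=\psi(x,\vr(x,z))$, then $v$ solves an elliptic equation $\di_{x,z}(A\na_{x,z}v)=0$ with $A=A[\vr]$ a rational function of $\na_x\vr,\p_z\vr$ — hence $A$ and its derivatives are controlled by $\|\eta\|_{W^{m,\infty}}$ and $\fd$ — together with $v|_{z=0}=f$, $\p_z v|_{z=-1}=0$, and $G[\eta]f=\big[\tfrac{1+|\na_x\eta|^2}{\p_z\vr}\p_z v-\na_x\eta\cdot\na_x v\big]_{z=0}$. (In infinite depth map to $\T^d\times(-\infty,0)$ via $\vr(x,z)=z+\eta(x)$, replacing the bottom condition by $\na v\in L^2$.) Let $\wt v$ solve the same equation with $\wt v|_{z=0}=\p_j f$, $\p_z\wt v|_{z=-1}=0$, so $G[\eta]\p_j f$ is given by the same boundary formula with $v$ replaced by $\wt v$, and set $W:=\p_j v-\wt v$. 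Since $\{z=0\}$ is flat, $\p_j$ commutes with the trace there, so $W|_{z=0}=0$; moreover $\di_{x,z}(A\na_{x,z}W)=-\di_{x,z}\big((\p_jA)\na_{x,z}v\big)=:\cR$ and $\p_z W|_{z=-1}=0$. Using $W|_{z=0}=0\Rightarrow\na_x W|_{z=0}=0$, a short computation gives
\[
[\p_j,G[\eta]]f=\tfrac{1+|\na_x\eta|^2}{\p_z\vr}\,\p_z W\Big|_{z=0}+\cE_j,\qquad
\cE_j:=\Big[\big(\p_j\tfrac{1+|\na_x\eta|^2}{\p_z\vr}\big)\p_z v-(\p_j\na_x\eta)\cdot\na_x v\Big]_{z=0}.
\]

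It remains to bound the two pieces in $H^\sigma(\T^d)$. The Leibniz remainder $\cE_j$ is harmless: $(\na_x v)|_{z=0}=\na_x f$, while $(\p_z v)|_{z=0}$ is the trace of a first-order derivative of $v$, controlled in $H^\sigma$ by $C(\|\eta\|)\|f\|_{H^{\sigma+1}}$ via interior elliptic regularity, and the $\eta$-prefactors cost at most two extra derivatives of $\eta$ through $\|gh\|_{H^\sigma}\lesssim\|g\|_{W^{\lceil\sigma\rceil,\infty}}\|h\|_{H^\sigma}$, within the $W^{\lceil\sigma+\frac52\rceil,\infty}$ budget. For the principal term I would transplant $W$ back: $\om(x,y):=W(x,\vr^{-1}(x,y))$ solves $\Delta\om=\wt\cR$ in $\Om_\eta$, $\om|_{\Sigma_\eta}=0$, $\p_y\om|_{y=-b}=0$ (the bottom condition stays homogeneous because $\na_x\vr$ vanishes there; in infinite depth replace by $\na\om\in L^2$), and $(\p_z W)|_{z=0}$ equals $(\p_y\om)|_{\Sigma_\eta}$ up to a smooth $\eta$-dependent factor. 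Applying Lemma \ref{lemm:eestloc} — \eqref{eest:loc} with $g_b=0$, or \eqref{eest:loc:id} — with $r=\sigma-\mez\ge0$ and then the trace theorem,
\[
\big\|(\p_z W)|_{z=0}\big\|_{H^\sigma}\lesssim\|\om\|_{H^{\sigma+\tdm}(\Om^{\frac h2})}\lesssim C\big(\|\eta\|_{W^{\lceil\sigma+\tdm\rceil,\infty}}\big)\Big(\|\wt\cR\|_{H^{\sigma-\frac12}(\Om^h)}+\|\wt\cR\|_{L^2(\Om)}\Big),
\]
and since $\wt\cR$ is the transplant of a divergence of $(\p_jA)\na\psi$, one has $\|\wt\cR\|_{H^{\sigma-1/2}(\Om^h)}\lesssim\|(\p_jA)\na\psi\|_{H^{\sigma+1/2}(\Om^h)}\lesssim\|\eta\|_{W^{\lceil\sigma+\frac52\rceil,\infty}}\|\na\psi\|_{H^{\sigma+1/2}(\Om^h)}\lesssim C(\|\eta\|)\|f\|_{H^{\sigma+1}}$ by interior elliptic regularity for $\psi$, while $\|\wt\cR\|_{L^2(\Om)}\lesssim\|(\p_jA)\na\psi\|_{H^1(\Om)}\lesssim C(\|\eta\|)\|f\|_{H^{3/2}}$, which is finite precisely because $\sigma\ge\mez$ forces $f\in H^{\sigma+1}\subset H^{3/2}$. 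This gives the base case with dependence $W^{\lceil\sigma+\frac52\rceil,\infty}=W^{\lceil\sigma+|\alpha|+\tdm\rceil,\infty}$, and with the induction it proves \eqref{commutator:est}.

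The main obstacle is exactly this last derivative count, which is also why the estimate is sharp (a full gain of one derivative): a priori $\cR$ looks like a second-order expression in $v$, and $v$ is only half a derivative smoother than $f$, so the estimate closes only because $\cR$ is genuinely in divergence form — hence may be placed in $H^{\sigma-1/2}$ rather than $H^{\sigma-3/2}$ — and because the trace and interior-elliptic steps each recover the missing half derivative. Keeping all these half-gains honest in both the finite and infinite depth geometries, and routing every coefficient through $\|\eta\|_{W^{\lceil\sigma+|\alpha|+\tdm\rceil,\infty}}$ rather than through Sobolev norms of $\eta$, is the delicate part; the reduction to $|\alpha|=1$ and the use of Lemma \ref{lemm:eestloc} are what make it manageable.
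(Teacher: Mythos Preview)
Your proposal is correct and follows essentially the same strategy as the paper: reduce to $|\alpha|=1$ by the same operator identity and induction, then for the base case subtract from the harmonic extension of $\partial_j f$ a well-chosen approximation with matching top trace, and apply Lemma~\ref{lemm:eestloc} to the difference. The implementation differs in one respect. You flatten the domain via $\varrho(x,z)=(1+z)\eta(x)+bz$, work with $W=\partial_j v-\tilde v$ in the strip, and then transplant back to apply the lemma; the paper instead stays in the physical domain $\Omega_\eta$ throughout and introduces directly the combination $h(x,y)=\partial_j q(x,y)+\partial_j\eta(x)\,\partial_y q(x,y)$, setting $\tilde p=p-h$. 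Un-flattening your $\partial_j v$ gives $h$ with the coefficient $\partial_j\eta$ replaced by $\tfrac{y+b}{\eta+b}\,\partial_j\eta$; because this factor vanishes at $y=-b$, your auxiliary function $\omega$ inherits a \emph{homogeneous} Neumann condition at the bottom, whereas the paper's $\tilde p$ carries the inhomogeneous datum $g_b=\partial_j\eta\,\Delta_x q(\cdot,-b)$, which then has to be bounded separately via a trace inequality. So your flattening buys a small simplification at the bottom boundary at the cost of an extra change-of-variables step; the paper's physical-space computation avoids transplanting back and forth but must track $g_b$. The derivative bookkeeping, the use of Lemma~\ref{lemm:eestloc} with $r=\sigma-\tfrac12$, and the final dependence on $\|\eta\|_{W^{\lceil\sigma+|\alpha|+3/2\rceil,\infty}}$ coincide in both routes.
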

    \begin{proof}
        We will only consider the more difficult case of finite depth. We will prove \eqref{commutator:est} by induction on $|\alpha|\in \mathbb{N}$. We first focus on the case $|\alpha| = 1$, i.e., $\p^\alpha=\p_j\equiv \frac{\p}{\p x_j}$ for some $j\in\{1, \dots, d\}$.  Thus we consider $\partial_j G[\eta]f - G[\eta] \partial_j f$, where $f\in H^{\sigma+1}(\T^d)$ with $\sigma \ge \mez$. Let $q$ and $p$ solve the problem
            \bq\label{ellitpic:q}
            \begin{cases}
            \Delta_{x,y} q = 0 \text{ in } \Omega,\\
            q |_{y=\eta} = f,\\
            \partial_y q |_{y=-b} = 0
            \end{cases}
            \eq
        and 
            \bq\label{ellitpic:p}
            \begin{cases}\Delta_{x,y} p = 0 \text{ in } \Omega,\\
            p |_{y=\eta} = \partial_j f,\\
            \partial_y p |_{y=-b} = 0,
            \end{cases}
            \eq
        respectively. Then we have 
      \[
            G[\eta] \partial_j f = \nabla p(x, \eta(x)) \cdot (-\nabla \eta, 1) 
            \]
            and
            \bq\label{djG}
            \begin{aligned}
            \partial_j G[\eta]f(x) 
            &= \partial_j \left\{ -\nabla_x q(x, \eta(x))\cdot\nabla \eta + \partial_y q(x, \eta(x)) \right\} \\
            &= -\nabla \eta\cdot\left\{\partial_j\nabla_x q(x, \eta(x)) + \partial_y \nabla_x q(x, \eta(x))\partial_j \eta\right\} \\
            & \qquad + \partial^2_{jy} q(x, \eta(x)) + \partial^2_y q(x, \eta(x))\partial_j \eta(x)\\
            &\qquad - \partial_j \nabla \eta\cdot\nabla_x q(x, \eta(x)).
        \end{aligned}
        \eq
       A key idea is to introduce the combination 
        \bq
            h(x, y) = \partial_j q(x,y) + \partial_j \eta(x) \partial_y q(x,y).
        \eq
        We first note that
        \bq\label{trace:h}
            h(x, \eta(x)) = \frac{\p}{\p {x_j}} [q(x,\eta(x))] = \partial_j f(x)= p(x, \eta(x)),
        \eq
        so $h$ and $p$ have the same trace on $\{y=\eta(x)\}$. We will show that $h$ is a good approximation of $p$. We compute 
        \bq\label{Delta:h}
            \Delta_{x,y} h = \partial_j \Delta_x \eta\partial_y q + 2 \partial_j \nabla_x \eta\cdot\partial_y \nabla_x q 
        \eq
        and
    \begin{align}\label{naxh}
        \nabla_x h &= \partial_j \nabla_x q +  \partial_j \nabla \eta \partial_y q+ \partial_j \eta \partial_y \nabla_x q,\\ \label{pyh}
\p_yh&=\p^2_{jy}q+\p_j\eta\p^2_yq=\p^2_{jy}q-\p_j\eta\Delta_xq.
        \end{align}
        Since $\p_yq(x, -b)=0$, it follows that 
        \bq\label{trace:pyh}
            \partial_y h(x,-b)  = -\partial_j \eta(x)\Delta_x q(x,-b).
        \eq
     Moreover, we have
     \bq\label{dNh}
        \begin{aligned}
            \nabla h(x, \eta(x)) \cdot (-\nabla \eta, 1) &= -\nabla \eta(x) \cdot \left\{\partial_j \nabla_x q(x,\eta(x)) +  \partial_y \nabla_x q(x,\eta(x)) \partial_j \eta(x) \right\}\\
            &\qquad+ \partial_{jy}^2 q(x,\eta(x)) +  \partial_y^2 q (x,\eta(x))\partial_j \eta (x)\\
            & \qquad +\nabla \eta(x) \cdot  \partial_j \nabla \eta(x)\partial_y q(x,\eta(x)).
        \end{aligned}
        \eq
        Comparing \eqref{djG} and \eqref{dNh} we see that their right-hand sides have exactly the same second order terms with respect to $q$. Consequently 
        \bq\label{dG-h}
        \begin{aligned}
            \partial_j G[\eta]f - \nabla h(x, \eta(x)) \cdot (-\nabla \eta, 1) &= \nabla \eta(x) \cdot \partial_j \nabla \eta(x)\partial_y q(x,\eta(x))  - \partial_j \nabla \eta(x)\cdot\nabla_x q(x, \eta(x)).
        \end{aligned}
        \eq
        Now we set
        \[
            \widetilde p = p - h.
        \]
        In view of \eqref{ellitpic:q}, \eqref{trace:h}, and \eqref{trace:pyh}, we find that $\wt{p}$ satisfies the boundary-value problem
        \bq\label{sys:wtp}
        \begin{cases}
            \Delta_{x,y} \widetilde p =  -\partial_j \Delta_x \eta\partial_y q - 2 \partial_j \nabla_x \eta\cdot\partial_y \nabla_x q=: G \text{ in } \Omega,\\
            \widetilde p(x,\eta(x)) = 0,\\
            \partial_y \widetilde p(x,-b) = \Delta_x q(x,-b)\partial_j \eta(x) =: g_b.
        \end{cases}
        \eq
        Approximating $p$ by $h$, we deduce from \eqref{dG-h} that
        \begin{align*}
            \partial_j G[\eta]f - G[\eta] \partial_j f &= \partial_j G[\eta]f - \nabla p(x, \eta(x)) \cdot (-\nabla \eta, 1) \\
            &= \partial_j G[\eta]f - \nabla h(x, \eta(x)) \cdot (-\nabla \eta, 1) - \nabla \widetilde p(x, \eta(x)) \cdot (-\nabla \eta, 1) \\
            &= \nabla \eta(x) \cdot \partial_j \nabla \eta(x)\partial_y q(x,\eta(x))  - \partial_j \nabla \eta(x)\cdot\nabla_x q(x, \eta(x))\\
            &\qquad- \nabla \widetilde p(x, \eta(x)) \cdot (-\nabla \eta, 1).
        \end{align*}
        From here, we can  begin to  estimate  the $H^\sigma$ norm of $[\p_j, G[\eta]]f$. We will appeal to the following product estimate 
        \bq\label{productest}
            \|uv\|_{H^r(\mathbb{T}^d)} \leq C(r, d)\|u\|_{H^r(\mathbb{T}^d)}\|v\|_{W^{\lceil r\rceil, \infty}(\mathbb{T}^d)},\quad r\in \Rr.
        \eq
        For $r\in \mathbb{N}$, \eqref{productest} follows  from Leibniz's rule. Since the mapping $u\mapsto uv$ is linear, the general case $r\in (0, \infty)$ follows from interpolation, and the general case $r\in (-\infty, 0)$ follows from the fact that  $H^{-r}(\T^d)$ is the dual space of $H^r(\T^d)$.

        By virtue of \eqref{cd:fd},  \eqref{Omegah} holds for $h>0$ sufficiently small. For $\sigma > 0$, we have the continuity  $H^{\sigma+\mez}_{x, y}(\Omega^\frac{h}{2})\to H^\sigma_x(\T^d)$ of the trace operator. Combining this with \eqref{productest}, we obtain
        \bq\label{est:comm:1}
        \begin{aligned}
            \|\partial_j G[\eta]f - G[\eta] \partial_j f\|_{H^\sigma} &\leq \|\nabla \eta(x) \cdot \partial_j \nabla \eta(x)\partial_y q(x,\eta(x))\|_{H_x^\sigma} + \|\partial_j \nabla \eta(x)\cdot\nabla_x q(x, \eta(x))\|_{H_x^\sigma} \\
            &\qquad+ \|\nabla \widetilde p(x, \eta(x)) \cdot (-\nabla \eta, 1)\|_{H_x^\sigma} \\
            &\leq C(\|\eta\|_{W^{\lceil\sigma+2\rceil, \infty}}) \|\na q(x, \eta(x))\|_{H_x^\sigma}  + C(\|\eta\|_{W^{\lceil\sigma+1\rceil, \infty}}) \|\nabla \widetilde p(x, \eta(x))\|_{H_x^\sigma} \\
            &\leq C(\|\eta\|_{W^{\lceil\sigma+2\rceil, \infty}}) \left\{\|\na q\|_{H_{x,y}^{\sigma+\frac{1}{2}}(\Omega^\frac{h}{2})} + \|\nabla \widetilde p\|_{H_{x,y}^{\sigma+\frac{1}{2}}(\Omega^\frac{h}{2})}\right\} \\
            &\leq C(\|\eta\|_{W^{\lceil\sigma+2\rceil, \infty}}) \left\{ \|\na q\|_{H_{x,y}^{\sigma+\mez}(\Omega^\frac{h}{2})} + \|\widetilde p\|_{H_{x,y}^{\sigma+\frac{3}{2}}(\Omega^\frac{h}{2})}\right\}.
        \end{aligned}
        \eq
        Since $\wt{p}$ solves the problem \eqref{sys:wtp}, we can apply  Lemma \ref{lemm:eestloc} (i) with $r=\sigma+\frac32\ge 2$ (for $\sigma\ge \mez$) and invoke \eqref{productest} to have
        \bq\label{est:wtp}
        \begin{aligned}
            \|\widetilde p\|_{H_{x,y}^{\sigma+\frac{3}{2}}(\Omega^\frac{h}{2})} &\leq C(\|\eta\|_{W^{\lceil\sigma+\frac32\rceil, \infty}})\{\|G\|_{H_{x,y}^{\sigma-\frac{1}{2}}(\Omega^h)} + \|g_b\|_{H_x^{-\frac12}}\} \\
            &\leq C(\|\eta\|_{W^{\lceil\sigma+\frac32\rceil, \infty}})\left\{\|\partial_j \Delta_x \eta\partial_y q\|_{H_{x,y}^{\sigma-\frac{1}{2}}(\Omega^h)} + \|2 \partial_j \nabla_x \eta\cdot\partial_y \nabla_x q\|_{H_{x,y}^{\sigma-\frac{1}{2}}(\Omega^h)} \right.\\
            & \quad \left.+ \|\Delta_x q(\cdot,-b) \partial_j \eta\|_{H_x^{-\frac12}}\right\} \\
            &\leq C(\|\eta\|_{W^{\lceil\sigma+\frac52\rceil, \infty}}) \{\|\partial_y q\|_{H_{x,y}^{\sigma-\frac{1}{2}}(\Omega^h)} + \|\partial_y \nabla_x q\|_{H_{x,y}^{\sigma-\frac{1}{2}}(\Omega^h)} + \|\Delta_x q(\cdot,-b)\|_{H_x^{-\frac12}}\} \\
            &\leq C(\|\eta\|_{W^{\lceil\sigma+\frac52\rceil, \infty}})\{ \|\na q\|_{H_{x,y}^{\sigma+\frac{1}{2}}(\Omega^h)} + \|q(\cdot,-b)\|_{H_x^{\frac32}}\} \\
            &\leq C(\|\eta\|_{W^{\lceil\sigma+\frac52\rceil, \infty}})\{ \|q\|_{H_{x,y}^{\sigma+\frac{3}{2}}(\Omega)}+\|q\|_{H_{x,y}^2(\Omega)}\} \\
            &\leq C(\|\eta\|_{W^{\lceil\sigma+\frac52\rceil, \infty}})\|q\|_{H_{x,y}^{\sigma+\frac{3}{2}}(\Omega)},
        \end{aligned}
        \eq
        where in the two last estimates we have used the trace inequality $\|q(\cdot,-b)\|_{H_x^{\frac32}}\le C\|q\|_{H_{x,y}^2(\Omega)}$ and the condition $\sigma+\frac32\ge 2$. 
        Plugging \eqref{est:wtp} back in \eqref{est:comm:1} yields
        \[
            \|\partial_j G[\eta]f - G[\eta] \partial_j f\|_{H^\sigma} \leq C(\|\eta\|_{W^{\lceil\sigma+\frac52\rceil, \infty}}) \|q\|_{H_{x,y}^{\sigma+\frac{3}{2}}(\Omega)}.
        \]
        Then, invoking the elliptic estimate 
        \[     \|q\|_{H_{x,y}^{\sigma+\frac{3}{2}}(\Omega)}\le C(\|\eta\|_{W^{\lceil\sigma+\frac32\rceil, \infty}})\| f\|_{H^{\sigma+1}_x}
        \]
        for the problem \eqref{ellitpic:q}, we obtain the desired commutator estimate 
    \bq\label{commest:base}
            \|\partial_j G[\eta]f - G[\eta] \partial_j f\|_{H^\sigma_x} \leq C(\|\eta\|_{W^{\lceil\sigma+\frac52\rceil, \infty}}) \|f\|_{H_x^{\sigma+1}}.
        \eq
        With the base case $|\alpha| = 1$ in hand, suppose  that the commutator estimate \eqref{commutator:est}  holds for $|\alpha| \le  k-1$, $k\ge 2$. Then for $|\alpha| = k$, we write $\alpha = \partial_j \partial^\beta$ for some $\beta$ with $|\beta| \le k-1$, so that
        \begin{align*}
            \partial_j[\partial^\beta, G[\eta]] &= \partial^\alpha G[\eta]- \partial_j G[\eta]\partial^\beta   \\
            &= \partial^\alpha G[\eta]-G[\eta]\partial^\alpha+ G[\eta]\partial_j\partial^\beta- \partial_j G[\eta]\partial^\beta    \\
            &=  [\partial^\alpha, G[\eta]] +[G[\eta], \partial_j]\partial^\beta.
        \end{align*}
        Thus using the base estimate \eqref{commest:base} and the induction hypothesis, we deduce
        \begin{align*}
            \|[G[\eta], \partial^\alpha]f\|_{H^\sigma_x} &\leq \|\partial_j[G[\eta], \partial^\beta] f\|_{H^\sigma_x} + \|[G[\eta], \partial_j]\partial^\beta f\|_{H^\sigma_x} \\
            & \leq \|[G[\eta], \partial^\beta] f\|_{H^{\sigma+1}_x} + C(\|\eta\|_{W^{\lceil\sigma+\frac52\rceil, \infty}})\|\partial^\beta f\|_{H^{\sigma+1}_x} \\
            &\leq  C(\|\eta\|_{W^{\lceil\sigma+1+|\beta|+\frac32\rceil, \infty}})\|f\|_{H^{\sigma+1+|\beta|}_x}+ C(\|\eta\|_{W^{\lceil\sigma+\frac52\rceil, \infty}})\|f\|_{H^{\sigma+1+|\beta|}_x} \\
            & \leq C(\|\eta\|_{W^{\lceil\sigma+|\alpha|+\frac32\rceil, \infty}})\|f\|_{H^{\sigma+|\alpha|}_x}.
        \end{align*}
        The proof of \eqref{commutator:est} is complete.
    \end{proof}
    \begin{rema}
      (i)  By virtue of the pointwise cancellations in the preceding proof, Theorem \ref{theo:commutator} can be proven analogously in other variants such as the nonperiodic setting  $f\in H^s(\Rr^d)$, commutator estimates in other norms (e.g. H\"older), and nonflat bottoms $\{y=-b(x)\}$.

      (ii) The commutator estimate \eqref{commutator:est}  shows a full gain of one derivative and its proof  is based entirely on physical space. On the other hand,  the {\it paralinearization} results in  \cite{ABZ1, NgPa} would only yield a gain of $1/2$ derivative.  
    \end{rema}
\section{Existence and uniqueness of slowly traveling  waves}\label{sec:existence}
    Given an external pressure $\phi: \T^d\to \Rr$, we shall construct traveling waves $\eta$ with speed $\gamma$  as solutions of  \eqref{eq:tw}.
    \begin{theo}
        Let $k \geq 1$, $\alpha \in (0,1)$, and $\phi \in \rC^{k,\alpha}(\mathbb{T}^d)$. In the finite depth case we assume that    \bq\label{separation:phi}
        \mu(\phi):=\inf_{x\in \T^d} (-\phi(x)+b)>0.
        \eq
         There exists a small number $\delta_0 = \delta_0(\|\phi\|_{C^{k,\alpha}})>0$ such that the following holds. For all $\delta \in (0, \delta_0)$, there exists $\eps = \eps(\delta, \|\phi\|_{C^{k,\alpha}}) > 0$ such that if $|\gamma| < \eps$ then \eqref{eq:tw} has a unique solution $\eta \in \rC^{k, \alpha}(\mathbb{T}^d)$ satisfying $\|\eta+\phi\|_{C^{k, \alpha}} \le  \delta$. Moreover, for any $\delta\in (0, \delta_0)$, the mapping
\bq\label{map:gammaeta}
         (-\eps, \eps)\ni \gamma \mapsto \eta\in \overline{B}_\delta(-\phi)\subset \rC^{k, \a}(\T^d)
         \eq
         is Lipschitz continuous. 
    \end{theo}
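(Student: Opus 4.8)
The plan is to recast \eqref{eq:tw} as a Banach fixed-point problem for the perturbation $u := \eta + \phi$ around the base state $u\equiv 0$ (that is, $\eta = -\phi$). Since $\phi \in \rC^{k,\alpha}(\T^d)$ has zero mean, we look for $u \in \rC^{k,\alpha}(\T^d)$ with $\|u\|_{C^{k,\alpha}}\le \delta$, so that $\eta = u-\phi \in \rC^{k,\alpha}(\T^d)$ automatically. In the finite depth case, fixing $\delta_0 \le \tfrac12\mu(\phi)$ guarantees $\inf_x(\eta(x)+b) \ge \mu(\phi)-\|u\|_{L^\infty}\ge \tfrac12\mu(\phi)=:\fd>0$ for every such $\eta$, so the separation condition \eqref{cd:fd} holds uniformly on the ball and all constants from Section~\ref{sec:DN} used below are controlled by $\|\phi\|_{C^{k,\alpha}}$ (and $\mu(\phi)$ in the finite depth case) alone; in the infinite depth case no such parameter is needed. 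Because $\partial_1\eta$ has zero mean and $G[\eta]$ kills constants, \eqref{eq:tw} is equivalent to $G[\eta]u = \gamma\,\partial_1\eta$ with both sides in $\rC^{k-1,\alpha}(\T^d)$, which by Proposition~\ref{prop:invertG}(ii) is equivalent to
\[
u = \Phi_\gamma(u):=\big(G[u-\phi]\big)^{-1}\!\big(\gamma\,\partial_1(u-\phi)\big).
\]
Note $\Phi_\gamma(u)\in\rC^{k,\alpha}(\T^d)$ since $\gamma\,\partial_1(u-\phi)$ has zero mean and $(G[u-\phi])^{-1}$ maps $\rC^{k-1,\alpha}$ to $\rC^{k,\alpha}$.

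First I would show $\Phi_\gamma$ maps the complete metric space $\overline{B}_\delta(0)\cap\rC^{k,\alpha}(\T^d)$ into itself: by \eqref{est:inverseG},
\[
\|\Phi_\gamma(u)\|_{C^{k,\alpha}} \le C\big(\|u-\phi\|_{C^{k,\alpha}}\big)\,|\gamma|\,\|\partial_1(u-\phi)\|_{C^{k-1,\alpha}} \le C\big(\delta_0+\|\phi\|_{C^{k,\alpha}}\big)\,|\gamma|\,\big(\delta+\|\phi\|_{C^{k,\alpha}}\big),
\]
which is $\le\delta$ provided $|\gamma|<\eps$ with $\eps$ small depending on $\delta$ and $\|\phi\|_{C^{k,\alpha}}$. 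For the contraction, with $A_i=G[u_i-\phi]$ and $f_i=\gamma\,\partial_1(u_i-\phi)$ I would use the resolvent identity $A_1^{-1}f_1-A_2^{-1}f_2 = A_1^{-1}(f_1-f_2)+A_1^{-1}(A_2-A_1)A_2^{-1}f_2$ together with \eqref{est:inverseG} and the contraction estimate \eqref{contraction:est:Holder}, using $\|(u_1-\phi)-(u_2-\phi)\|_{C^{k,\alpha}}=\|u_1-u_2\|_{C^{k,\alpha}}$ and $\|u_i-\phi\|_{C^{k,\alpha}}\le\delta_0+\|\phi\|_{C^{k,\alpha}}$ to freeze all the nondecreasing constants; this yields
\[
\|\Phi_\gamma(u_1)-\Phi_\gamma(u_2)\|_{C^{k,\alpha}}\le |\gamma|\,C_*\big(\|\phi\|_{C^{k,\alpha}}\big)\,\|u_1-u_2\|_{C^{k,\alpha}}.
\]
Shrinking $\eps$ so that $|\gamma|\,C_*(\|\phi\|_{C^{k,\alpha}})\le\tfrac12$, $\Phi_\gamma$ is a $\tfrac12$-contraction and Banach's fixed point theorem gives a unique $u=u(\gamma)\in\overline{B}_\delta(0)$; then $\eta=u(\gamma)-\phi$ solves \eqref{eq:tw}. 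Conversely, any solution $\eta\in\rC^{k,\alpha}$ of \eqref{eq:tw} with $\|\eta+\phi\|_{C^{k,\alpha}}\le\delta$ makes $u=\eta+\phi$ a fixed point of $\Phi_\gamma$ in the ball, so uniqueness follows.

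For the Lipschitz dependence, given $\gamma_1,\gamma_2\in(-\eps,\eps)$ with fixed points $u_1,u_2$, I would write $u_1-u_2=\big(\Phi_{\gamma_1}(u_1)-\Phi_{\gamma_1}(u_2)\big)+\big(\Phi_{\gamma_1}(u_2)-\Phi_{\gamma_2}(u_2)\big)$; the first bracket has $C^{k,\alpha}$ norm $\le\tfrac12\|u_1-u_2\|_{C^{k,\alpha}}$ by the contraction bound, while the second equals $(G[u_2-\phi])^{-1}\big((\gamma_1-\gamma_2)\partial_1(u_2-\phi)\big)$ and is $\le C(\delta_0+\|\phi\|_{C^{k,\alpha}})(\delta+\|\phi\|_{C^{k,\alpha}})|\gamma_1-\gamma_2|$ by \eqref{est:inverseG}. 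Absorbing the $\tfrac12\|u_1-u_2\|_{C^{k,\alpha}}$ term gives $\|\eta(\gamma_1)-\eta(\gamma_2)\|_{C^{k,\alpha}}=\|u_1-u_2\|_{C^{k,\alpha}}\les|\gamma_1-\gamma_2|$, i.e. the map \eqref{map:gammaeta} is Lipschitz.

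I do not expect a serious obstacle here: the substantive input — invertibility of $G[\eta]$ on mean-zero Hölder spaces for large $\eta$, and Lipschitz dependence of $G[\cdot]$ on $\eta$ — is precisely Proposition~\ref{prop:invertG} and estimate \eqref{contraction:est:Holder}. The points demanding care are (a) the bookkeeping of the mean-zero constraint, so that $(G[\cdot])^{-1}$ is legitimately applied and $\Phi_\gamma$ preserves $\rC^{k,\alpha}$; (b) checking that $\delta_0$ can be chosen in terms of $\|\phi\|_{C^{k,\alpha}}$ (and $\mu(\phi)$ in finite depth) only, which works because $\fd=\tfrac12\mu(\phi)$ then fixes every Section~\ref{sec:DN} constant on the whole ball; and (c) the order of quantifiers — choose $\delta_0$ first, then $\eps=\eps(\delta,\|\phi\|_{C^{k,\alpha}})$ small enough for both the self-mapping and the contraction thresholds.
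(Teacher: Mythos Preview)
Your argument is correct, and it differs from the paper's in a meaningful way. The paper writes $\zeta=\eta+\phi$ and splits
\[
G[\zeta-\phi]\zeta=G[-\phi]\zeta+\bigl(G[\zeta-\phi]\zeta-G[-\phi]\zeta\bigr),
\]
then inverts the \emph{fixed} operator $G[-\phi]$ to set up the fixed-point map
\[
T_\gamma(\zeta)=(G[-\phi])^{-1}\bigl\{\gamma\partial_1\zeta-\gamma\partial_1\phi-(G[\zeta-\phi]\zeta-G[-\phi]\zeta)\bigr\}.
\]
The extra ``quadratic'' term $G[\zeta-\phi]\zeta-G[-\phi]\zeta$ contributes a piece of size $O(\delta^2)$ to the self-mapping bound and $O(\delta)$ to the contraction constant, so the paper must take both $\delta$ and $|\gamma|$ small. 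By contrast, you invert the \emph{$u$-dependent} operator $G[u-\phi]$ and handle the contraction via the resolvent identity; because $\Phi_\gamma(u_2)$ already carries a factor $|\gamma|$, your Lipschitz constant is $|\gamma|\,C_*(\|\phi\|_{C^{k,\alpha}})$ with no $\delta$-term, so smallness of $|\gamma|$ alone forces contraction and $\delta_0$ is needed only for the separation condition in finite depth. This is a slightly sharper conclusion than the theorem actually asserts. The trade-off is that the paper's map uses a single fixed inverse (simpler to implement), while yours requires controlling how $(G[\cdot])^{-1}$ varies, which you do correctly via \eqref{est:inverseG} and \eqref{contraction:est:Holder}. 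The Lipschitz-in-$\gamma$ argument is the same in both proofs.
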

    \begin{proof}
    The idea is to perturb around the solution \eqref{origin}. To this end, we set $\zeta = \eta + \phi$, so that  \eqref{eq:tw} can be equivalently rewritten as 
        \bq\label{eq:tw:zeta}
        \begin{aligned}
        0 &= -\gamma\partial_1\zeta + \gamma \partial_1\phi+G[\zeta-\phi]\zeta\\
        &= -\gamma\partial_1\zeta + \gamma \partial_1\phi+G[-\phi]\zeta + (G[\zeta-\phi]\zeta-G[-\phi]\zeta).
        \end{aligned}
        \eq
        In the finite depth case, we assume that $\| \zeta\|_{C(\T^d)}<\mu(\phi)$  so that  \eqref{separation:phi} yields $\zeta-\phi>-b$ in order for $G[\zeta-\phi]$ to be well-defined. By assumption, $\phi \in C^{k, \alpha}$, so we have by Theorem 2.8 that $G[-\phi]:\mathring{C}^{k,\alpha} \rightarrow \mathring{C}^{k-1,\alpha}$ is an isomorphism. We can therefore invert $G[-\phi]$ in $G[-\phi]\zeta$ and rewrite \eqref{eq:tw:zeta} as
        \bq\label{T:zeta}
        \zeta = (G[-\phi])^{-1} \left\{\gamma\partial_1\zeta -\gamma\partial_1\phi-\left(G[\zeta-\phi]\zeta-G[-\phi]\zeta\right)\right\} =: T_\gamma(\zeta).\eq
    This reformulation  is a fixed point problem in $\zeta\in \rC^{k, \alpha}(\mathbb{T}^d)$. Our goal will be to find small posotive numbers $\delta_0$ and $\eps$ such that $T_\gamma$ is a contraction mapping on any ball $\overline{B}_\delta(0)$ in $\rC^{k, \alpha}(\mathbb{T}^d)$ provided $\delta<\delta_0$ and $|\gamma|<\eps$. 
    
    Suppose $\zeta$ is in $\overline{B}_\delta(0)$ for some $0 < \delta < 1$. We shall only consider the infinite depth case as the finite depth case only requires the  additional condition $0<\delta<\mu(\phi)$.  Combining the estimate \eqref{est:inverseG} for the norm of $(G[-\phi])^{-1}$ and the contraction estimate \eqref{contraction:est:Holder} for $G[\cdot]$, we obtain
    \begin{align*}
   \|T_\gamma(\zeta)\|_{C^{k, \alpha}} &\leq C(\|\phi\|_{C^{k,\alpha}}) \|\gamma\partial_1\zeta -\gamma\partial_1\phi-[G[\zeta-\phi]\zeta-G[-\phi]\zeta]\|_{C^{k-1, \alpha}} \\
         &\leq C(\|\phi\|_{C^{k,\alpha}}) \left\{|\gamma|\|\zeta\|_{C^{k, \alpha}} + |\gamma|\|\phi\|_{C^{k, \alpha}} + \widetilde C(\|\zeta-\phi\|_{C^{k, \alpha}}, \|\phi\|_{C^{k, \alpha}})\|\zeta\|^2_{C^{k, \alpha}}\right\} \\
        &\leq C(\|\phi\|_{C^{k,\alpha}})\left\{|\gamma|\delta+|\gamma|\|\phi\|_{C^{k, \alpha}}+\widetilde C(\|\phi\|_{C^{k, \alpha}}+1, \|\phi\|_{C^{k, \alpha}})\delta^2\right\} \\
        &\leq A|\gamma|\delta+A|\gamma|\|\phi\|_{C^{k, \alpha}}+A\delta^2,
    \end{align*}
    where $A=A(\|\phi\|_{C^{k,\alpha}})$. If $\delta$ and $\gamma$ satisfy 
    \bq\label{cd:delta:1}
    \delta < \frac{1}{4A},\quad |\gamma| < \min(\frac{1}{4A}, \frac{\delta}{4A\|\phi\|_{C^{k, \alpha}}}),
    \eq
    then $T_\gamma$ maps $\overline{B}_\delta(0)$ into itself.
    
   Next, we show that $T_\gamma(\zeta): \overline{B}_\delta(0) \rightarrow \overline{B}_\delta(0)$ is  a contraction. Suppose $\zeta_1$, $ \zeta_2 \in \overline{B}_\delta(0)$. Then \eqref{est:inverseG} implies 
   \bq\label{contraction:T:1}
 \begin{aligned}
       & \|T_\gamma(\zeta_1)-T_\gamma(\zeta_2)\|_{C^{k, \alpha}} \\
        &\leq C(\|\phi\|_{C^{k,\alpha}})\|\gamma\partial_1\zeta_1-\gamma\partial_1\zeta_2+G[\zeta_2-\phi]\zeta_2-G[-\phi]\zeta_2-G[\zeta_1-\phi]\zeta_1+G[-\phi]\zeta_1\|_{C^{k-1, \alpha}} \\
        &\leq C(\|\phi\|_{C^{k,\alpha}})\left\{|\gamma|\|\zeta_1-\zeta_2\|_{C^{k, \alpha}}+\|G[\zeta_2-\phi]\zeta_2-G[-\phi]\zeta_2-G[\zeta_1-\phi]\zeta_1+G[-\phi]\zeta_1\|_{C^{k-1, \alpha}}\right\}.
\end{aligned}
\eq
    Focusing on the terms involving the Dirichlet-Neumann operator $G$, we write
\begin{align*}&G[\zeta_2-\phi]\zeta_2-G[-\phi]\zeta_2-G[\zeta_1-\phi]\zeta_1+G[-\phi]\zeta_1 \\
        &= -G[\zeta_2-\phi](\zeta_1-\zeta_2)+G[\zeta_2-\phi]\zeta_1+G[-\phi](\zeta_1-\zeta_2)-G[\zeta_1-\phi]\zeta_1\\
        &=\left\{G[-\phi](\zeta_1-\zeta_2)-G[\zeta_2-\phi](\zeta_1-\zeta_2)\right\}+\left\{G[\zeta_2-\phi]\zeta_1-G[\zeta_1-\phi]\zeta_1\right\}.\end{align*}
    The contraction estimate \eqref{contraction:est:Holder} then yields
        \begin{align*}
            &\|G[\zeta_2-\phi]\zeta_2-G[-\phi]\zeta_2-G[\zeta_1-\phi]\zeta_1+G[\zeta_1-\phi]\zeta_1\|_{C^{k-1, \alpha}} \\
        &\leq\|G[-\phi](\zeta_1-\zeta_2)-G[\zeta_2-\phi](\zeta_1-\zeta_2)\|_{C^{k-1, \alpha}}+\|G[\zeta_2-\phi]\zeta_1-G[\zeta_1-\phi]\zeta_1\|_{C^{k-1, \alpha}} \\
        &\leq\widetilde C(\|-\phi\|_{C^{k, \alpha}}, \|\zeta_2-\phi\|_{C^{k, \alpha}})\|\zeta_2\|_{C^{k, \alpha}} \|\zeta_1-\zeta_2\|_{C^{k, \alpha}}\\
        &\quad+ \widetilde C(\|\zeta_2-\phi\|_{C^{k, \alpha}}, \|\zeta_1-\phi\|_{C^{k, \alpha}})\|\zeta_1-\zeta_2\|_{C^{k, \alpha}}\|\zeta_1\|_{C^{k, \alpha}} \\
        &\leq 2\delta\widetilde C(1+\|\phi\|_{C^{k, \alpha}}, 1+\|\phi\|_{C^{k, \alpha}}) \|\zeta_1-\zeta_2\|_{C^{k, \alpha}}.
    \end{align*}
  Putting this back into \eqref{contraction:T:1}, we find
\bq\label{contraction:Tgamma}
\|T_\gamma(\zeta_1)-T_\gamma(\zeta_2)\|_{C^{k, \alpha}} 
        \leq B|\gamma|\|\zeta_1-\zeta_2\|_{C^{k, \alpha}}+ B\delta\|\zeta_1-\zeta_2\|_{C^{k, \alpha}},
\eq
    where  $B=B(\|\phi\|_{C^{k,\alpha}})$. If $\delta$ and $\gamma$ satisfy 
    \bq\label{cd:delta:2}
    \delta < \frac{1}{4B},\quad |\gamma| < \frac{1}{4B},
    \eq
    then $T_\gamma$ is a contraction mapping on $\overline{B}_\delta(0)$. 
    
    In view of \eqref{cd:delta:1} and \eqref{cd:delta:2}, we conclude that if
\bq\label{parameters:existence}
    \delta < \min(\frac{1}{4A}, \frac{1}{4B})=:\delta_0(\| \phi\|_{C^{k, \alpha}}),\quad |\gamma|<\min(\frac{1}{4A}, \frac{\delta}{4A\|\phi\|_{C^{k, \alpha}}}, \frac{1}{4B})=:\eps(\delta, \|\phi\|_{C^{k, \alpha}})
    \eq
    then $T_\gamma$ has a unique fixed point in the $\overline{B}_\delta (0)$ in $\rC^{k, \alpha}(\mathbb{T}^d)$. Put another way, there exists $\delta_0=\delta_0(\| \phi\|_{C^{k, \alpha}}) > 0$ such that the following holds: for all $0<\delta<\delta_0$ there exists $\eps =\eps(\delta, \|\phi\|_{C^{k, \alpha}})> 0$ such that for all $|\gamma| < \eps$ there is a unique traveling wave $\eta\in  \overline{B}_\delta(-\phi)\subset \rC^{k, \alpha}(\mathbb{T}^d)$ with speed $\gamma$.

    Now, we fix $\delta\in (0, \delta_0)$ and consider $\gamma_1$, $\gamma_2\in (-\eps, \eps)$, where $\eps=\eps(\delta, \|\phi\|_{C^{k, \alpha}})$ as given above. Let $\zeta_j=\eta_j+\phi$ be the fixed point of $T_{\gamma_j}$ in $\overline{B}_\delta(0)$. Then \eqref{T:zeta} implies
    \bq\label{zeta:diff}
    \zeta_1-\zeta_2=\left(T_{\gamma_1}(\zeta_1)-T_{\gamma_1}(\zeta_2)\right)+\left(T_{\gamma_1}(\zeta_2)-T_{\gamma_2}(\zeta_2)\right),
    \eq
    where $\zeta_1-\zeta_2=\eta_1-\eta_2$. Using \eqref{contraction:Tgamma} with $\gamma=\gamma_1$ gives
    \bq\label{zeta:diff2}
\|T_{\gamma_1}(\zeta_1)-T_{\gamma_1}(\zeta_2)\|_{C^{k, \alpha}} 
        \leq B|\gamma_1|\|\zeta_1-\zeta_2\|_{C^{k, \alpha}}+ B\delta\|\zeta_1-\zeta_2\|_{C^{k, \alpha}}.
        \eq
        On the other hand, by applying \eqref{est:inverseG} and increasing $B=B(\| \phi\|_{C^{k, a}})$ if necessary, we find 
        \bq\label{zeta:diff3}
        \begin{aligned}
\| T_{\gamma_1}(\zeta_2)-T_{\gamma_2}(\zeta_2)\|_{C^{k, \a}}&=\|(G[-\phi])^{-1}\left\{(\gamma_1-\gamma_2)\p_1\zeta_2-(\gamma_1-\gamma_2)\p_1\phi \right\}\|_{C^{k, \a}}\\
& \le B|\gamma_1-\gamma_2|\| \zeta_2\|_{C^{k, a}}+B|\gamma_1-\gamma_2|\| \phi\|_{C^{k, \a}}\\
&\le B\delta|\gamma_1-\gamma_2|+B|\gamma_1-\gamma_2|\| \phi\|_{C^{k, \a}}.
\end{aligned}
        \eq
        Combining \eqref{zeta:diff}, \eqref{zeta:diff2}, \eqref{zeta:diff3}, and recalling the choice \eqref{parameters:existence} of $\delta$ and $\eps$, we deduce
        \[
\| \zeta_1-\zeta_2\|_{C^{k, \a}}\le \mez \| \zeta_1-\zeta_2\|_{C^{k, \a}}+\left(\frac14+B\| \phi\|_{C^{k, \a}}\right)|\gamma_1-\gamma_2|.
        \]
        We thus obtain
        \[
        \| \zeta_1-\zeta_2\|_{C^{k, \a}}\le \left(\frac12+2B\| \phi\|_{C^{k, \a}}\right)|\gamma_1-\gamma_2|,
        \]
        thereby concluding the Lipschitz continuity of the mapping \eqref{map:gammaeta}.
    \end{proof}
   \section{Linear dynamics near large traveling waves}\label{sec:lin}
   Let $\eta_*: \T^d\to \Rr$ be a periodic traveling wave with speed $\gamma$. We study the linear dynamics generated by linearization of \eqref{eq:eta} around $\eta_*$. We will not make any  smallness  assumption on $\eta_*$. The linear operator of interest is 
   \bq\label{def:cL}
    \cL = \gamma\partial_1  -G[\eta_*],\quad \gamma \in \Rr.
\eq
$\cL$ is the sum of a skew-adjoint and a self-adjoint operator. 

Our goal is to establish the well-posedness of the linear evolution equation associated to $\cL$. This will be achieved in the Banach space 
\bq\label{def:Xs}
    X_T^s \equiv X^s([0, T])= C([0,T]; \rH^s(\T^d)) \cap L^2([0,T]; H^{s+\frac12}(\T^d))
\eq
 equipped with the norm
\bq
    \|f\|_{X^s_T} = \|f\|_{C([0,T]; H^s)} + \|f\|_{L^2([0,T]; H^{s+\frac12})}.
\eq
   \begin{prop}\label{prop:g}
    Let $s\in \mathbb{N}$, $\eta_*\in W^{s+2, \infty}(\T^d)\cap H^r(\T^d)$ with $r\ge s+1$ and $r>1+\frac{d}{2}$. Then for any $g_0\in \rH^s(\T^d)$, $T>0$, and $F\in L^2([0, T]; \rH^{s-\mez}(\T^d))$, there exists a unique solution $g\in X^s_T$ to the initial-value problem 
  \bq\label{eq:g:2}
    \partial_t g = \cL g+F,\quad g\vert_{t=0} = g_0,
\eq
Moreover,  we have 
    \bq\label{est:g:Xs}
    \|g\|_{X^s_T} \leq C(\|\eta_*\|_{W^{s+2, \infty}}) \left\{\|g_0\|_{H^s}+\| F\|_{L^2([0, T]; H^{s-\mez})}\right\}
    \eq
    for some $C: \Rr_+\to \Rr_+$ depending only on $(s,  d, b,  \fd)$.
\end{prop}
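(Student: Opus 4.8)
The plan is to exploit the parabolic structure of \eqref{eq:g:2}: the operator $\cL=\gamma\p_1-G[\eta_*]$ is the sum of the skew-adjoint transport operator $\gamma\p_1$ and the operator $-G[\eta_*]$, which by \eqref{coercive} is dissipative at the level of half a derivative — on mean-free functions one has $-(G[\eta_*]g,g)\le-c\big(\|g\|_{H^\mez}^2+\|g\|_{L^2}^2\big)$ (using $|k|\ge1$ for $k\in\Zz^d\setminus\{0\}$), where $c>0$ depends only on $(\|\eta_*\|_{W^{1,\infty}},\fd,d)$. The strategy is: (a) derive the a priori bound \eqref{est:g:Xs} via $L^2$- and $H^s$-energy estimates in which the parabolic smoothing $\|g\|_{L^2([0,T];H^{s+\mez})}$ absorbs every commutator error; (b) obtain existence by running these estimates on a Galerkin scheme; (c) obtain uniqueness from the $L^2$-energy estimate. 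Note that $g_0$, $F$ are mean-free and $\p_1$, $G[\eta_*]$ preserve this, so the solution stays mean-free; moreover the hypothesis $\eta_*\in H^r$, $r\ge s+1$, $r>1+\tfrac d2$ — automatic here since $W^{s+2,\infty}(\T^d)\hookrightarrow H^{s+2}(\T^d)$ — guarantees via Proposition \ref{prop:DN:Sobolevcontinuity} the boundedness of $G[\eta_*]$ between the Sobolev spaces that occur below.

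\emph{A priori estimate.} Testing \eqref{eq:g:2} against $g$ annihilates the transport term, and using the coercivity just stated together with Young's inequality on $(F,g)$ yields
\bq\label{est:g:base}
\frac{d}{dt}\|g\|_{L^2}^2+c_1\|g\|_{H^\mez}^2+c_1\|g\|_{L^2}^2\le c_2\|F\|_{H^{-\mez}}^2
\eq
with $c_1,c_2>0$ depending only on $(\|\eta_*\|_{W^{1,\infty}},\fd,d)$; Grönwall then gives the $T$-independent bound $\|g\|_{C([0,T];L^2)}^2+\|g\|_{L^2([0,T];H^\mez)}^2\lesssim\|g_0\|_{L^2}^2+\|F\|_{L^2([0,T];H^{-\mez})}^2$. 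Next, for $\alpha\in\Nn^d$ with $1\le|\alpha|\le s$, apply $\p^\alpha$ to \eqref{eq:g:2} and test against $\p^\alpha g$: the transport term drops, while the $G$-term contributes $-(G[\eta_*]\p^\alpha g,\p^\alpha g)-([\p^\alpha,G[\eta_*]]g,\p^\alpha g)$, whose first piece is $\le-c\|\p^\alpha g\|_{H^\mez}^2$. For the commutator, Theorem \ref{theo:commutator} with $\sigma=\mez$ furnishes a \emph{full} gain of one derivative,
\[
\|[\p^\alpha,G[\eta_*]]g\|_{H^\mez}\le C\big(\|\eta_*\|_{W^{\lceil|\alpha|+2\rceil,\infty}}\big)\|g\|_{H^{|\alpha|+\mez}}\le C\big(\|\eta_*\|_{W^{s+2,\infty}}\big)\|g\|_{H^{s+\mez}},
\]
so $|([\p^\alpha,G[\eta_*]]g,\p^\alpha g)|\le C\|g\|_{H^{s+\mez}}\|\p^\alpha g\|_{H^{-\mez}}\le C\|g\|_{H^{s+\mez}}\|g\|_{H^{s-\mez}}$; interpolating $H^{s-\mez}$ between $L^2$ and $H^{s+\mez}$ and then applying Young's inequality bounds this by $\eps\|g\|_{H^{s+\mez}}^2+C_\eps\|g\|_{L^2}^2$. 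Choosing $\eps$ small relative to $c$ absorbs the first term into the dissipation, and the second is controlled by the base bound above. Treating $(\p^\alpha F,\p^\alpha g)$ in the same way, summing over $1\le|\alpha|\le s$, adding \eqref{est:g:base}, using the equivalences $\sum_{|\alpha|\le s}\|\p^\alpha g\|_{L^2}^2\simeq\|g\|_{H^s}^2$ and $\sum_{|\alpha|\le s}\|\p^\alpha g\|_{H^\mez}^2\simeq\|g\|_{H^{s+\mez}}^2$, and integrating in time produces \eqref{est:g:Xs} — with a $T$-independent constant, thanks to the exponential $L^2$-decay built into \eqref{est:g:base}.

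\emph{Existence and uniqueness.} For existence, run the computation above on the Galerkin approximations $\p_tg_N=P_N\cL P_Ng_N+P_NF$, $g_N|_{t=0}=P_Ng_0$, where $P_N$ is the orthogonal projection onto the Fourier modes $|k|\le N$: these are finite-dimensional linear ODEs, globally solvable, and — since $P_N$ commutes with each $\p^\alpha$ — the estimates above hold for $g_N$ uniformly in $N$. Weak-$\ast$ compactness in $X^s_T$ extracts a limit $g$ solving \eqref{eq:g:2} and inheriting \eqref{est:g:Xs}, and the continuity $t\mapsto g(t)\in\rH^s$ follows from the equation together with the energy identity. Uniqueness is immediate from \eqref{est:g:base}: a difference of two solutions in $X^s_T$ solves \eqref{eq:g:2} with $g_0=0$, $F=0$, hence vanishes.

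\emph{Main obstacle.} The crux is the top-order commutator $([\p^\alpha,G[\eta_*]]g,\p^\alpha g)$ with $|\alpha|=s$: it must be dominated by the only smoothing available, $\|g\|_{L^2([0,T];H^{s+\mez})}$, and this closes precisely because Theorem \ref{theo:commutator} shows $[\p^\alpha,G[\eta_*]]$ is genuinely of order $|\alpha|$, with a constant depending on $\eta_*$ only through a fixed $W^{\cdot,\infty}$-norm and \emph{not} on its size. A paralinearization-type estimate with a gain of only $\mez$ derivative would render this commutator effectively of order $|\alpha|+\mez$ — as singular, relative to the regularity of $g$, as $G[\eta_*]$ itself — so that no resulting term could be absorbed by the dissipation. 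Thus the whole argument rests on the sharpness of the commutator estimate of Section \ref{sec:DN} together with the coercivity \eqref{coercive}, both of which are valid for far-from-flat $\eta_*$.
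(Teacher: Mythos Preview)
Your a priori estimate is essentially the paper's own computation: the same $L^2$ energy identity using the coercivity \eqref{coercive}, the same $H^s$ estimate obtained by commuting $\p^\alpha$ through $G[\eta_*]$ and invoking Theorem \ref{theo:commutator} with $\sigma=\tfrac12$, the same interpolation-plus-Young step to absorb the commutator into the dissipation, and the same summation over $|\alpha|\le s$ to close. The identification of the ``main obstacle'' is also exactly right.

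Where you diverge from the paper is in the existence mechanism. The paper runs a \emph{vanishing viscosity} argument: it introduces the regularized problem $\p_tg^\eps=(\gamma\p_1+\eps\Delta-G[\eta_*])g^\eps+F^\eps$, establishes its well-posedness by a separate contraction-mapping lemma (Lemma \ref{lemm:geps}), derives the uniform-in-$\eps$ bound in $X^s_T$ by the same energy computation, and then proves that $(g^\eps)$ is Cauchy in $X^s_T$ as $\eps\to0$ by a second energy estimate in which the extra forcing $(\eps-\eps')\Delta g^{\eps'}$ is absorbed by the artificial dissipation $\eps\Delta$. Your Galerkin scheme is a legitimate and more economical alternative: because $P_N$ is self-adjoint and commutes with $\p^\alpha$, every term in the energy computation survives verbatim at the approximate level (in particular $(P_NG[\eta_*]\p^\alpha g_N,\p^\alpha g_N)=(G[\eta_*]\p^\alpha g_N,\p^\alpha g_N)$ retains the coercivity), and the passage to the limit is routine for a linear equation. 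What the paper's route buys is that the differential inequality \eqref{eest:geps} is recorded explicitly along the way and then reused, with $\eps=0$, in the stability proof (Step~2 of Theorem \ref{theo:stability}); your argument yields the same inequality, so nothing is lost.
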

We will prove Proposition \ref{prop:g} by the method of vanishing viscosity. This  requires well-posedness of the regularized problem, established in the following lemma.
\begin{lemm}\label{lemm:geps}
Let $r>1+\frac{d}{2}$, $-\mez\le s\le r-1$, and $\eta_*\in H^r(\T^d)$. For $\eps \in (0, 1)$, we set 
\[
L_\eps=\gamma\p_1+\eps \Delta.
\]
For any $g_0\in \rH^s(\T^d)$, $T>0$, and $F\in L^2([0, T]; \rH^s(\T^d))$, there exists a unique solution $g\in Y^s([0, T])$ to the initial-value problem 
\bq\label{modified}
    \partial_t g^\eps= L_\eps g^\eps - G[\eta_*]g^\eps+F,
    \quad g^\eps|_{t=0} = g_0,
\eq 
where $Y^s([0, T])$ is the Banach space
\bq
    Y^s([0, T]):= C([0,T]; \rH^s) \cap L^2([0,T]; H^{s+1}) \subset X^s_T.
\eq
\end{lemm}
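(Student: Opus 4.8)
The plan is to solve the linear parabolic problem \eqref{modified} by a standard energy-plus-semigroup argument, exploiting that the dominant operator $\eps\Delta$ is a genuine second-order dissipation while $\gamma\p_1$ is skew-adjoint and $-G[\eta_*]$ is a bounded-below, order-one perturbation. First I would observe that $\eps\Delta$ generates an analytic contraction semigroup on each $\rH^\sigma(\T^d)$ (the zero-mean constraint is preserved since $\Delta$, $\p_1$, and $G[\eta_*]$ all annihilate constants and preserve the mean, because $\int_{\T^d}G[\eta_*]g=0$). The operators $\gamma\p_1$ and $G[\eta_*]$ are both bounded from $\rH^{\sigma+1}$ to $\rH^\sigma$: for $G[\eta_*]$ this is Proposition \ref{prop:DN:Sobolevcontinuity} with the regularity $\eta_*\in H^r$, $r>1+\tfrac d2$, applied at the appropriate indices $\sigma\in[\mez,r-1]$ — this is exactly why the hypothesis $-\mez\le s\le r-1$ appears. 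Hence $L_\eps-G[\eta_*]$ is a relatively bounded (indeed relatively compact) perturbation of $\eps\Delta$ with relative bound $0$, so by standard perturbation theory it still generates an analytic semigroup on $\rH^s(\T^d)$; together with Duhamel's formula and the mapping properties of the analytic semigroup this yields a unique mild solution $g^\eps\in C([0,T];\rH^s)$ for $F\in L^2([0,T];\rH^s)$, and the parabolic smoothing gives $g^\eps\in L^2([0,T];\rH^{s+1})$, i.e. $g^\eps\in Y^s([0,T])$.

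Alternatively — and this is the route I would actually write out, since it is more self-contained and also produces the quantitative bound that the vanishing-viscosity passage in Proposition \ref{prop:g} will need — I would use a Galerkin scheme: project onto $\mathrm{span}\{e^{ik\cdot x}: 0<|k|\le n\}$, solve the resulting linear ODE system, and derive uniform-in-$n$ a priori estimates. Testing the equation against $\Lambda^{2s}g^\eps$ (where $\Lambda=(-\Delta)^{1/2}$ on the zero-mean subspace) gives
\[
\frac12\frac{d}{dt}\|g^\eps\|_{\rH^s}^2 + \eps\|g^\eps\|_{\rH^{s+1}}^2 + (G[\eta_*]g^\eps,g^\eps)_{\rH^s} = (F,g^\eps)_{\rH^s},
\]
because $(\gamma\p_1 g^\eps,\Lambda^{2s}g^\eps)_{L^2}=0$ by skew-adjointness. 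The coercivity estimate \eqref{coercive} shows $(G[\eta_*]g^\eps,g^\eps)_{\dot H^s}\ge 0$ up to a controllable commutator $([\Lambda^s,G[\eta_*]]g^\eps,\Lambda^s g^\eps)_{L^2}$, which by Theorem \ref{theo:commutator} (or, at this low-regularity level, by Proposition \ref{prop:DN:Sobolevcontinuity} and interpolation) is bounded by $C(\|\eta_*\|_{W^{s+2,\infty}})\|g^\eps\|_{\rH^s}^2$; absorbing half of the $\eps\|g^\eps\|_{\rH^{s+1}}^2$ term where needed and applying Cauchy--Schwarz to $(F,g^\eps)_{\rH^s}$ then gives, via Gr\"onwall,
\[
\|g^\eps\|_{C([0,T];\rH^s)}^2 + \eps\|g^\eps\|_{L^2([0,T];\rH^{s+1})}^2 \le C(\|\eta_*\|_{W^{s+2,\infty}},T)\Big(\|g_0\|_{\rH^s}^2 + \|F\|_{L^2([0,T];\rH^s)}^2\Big),
\]
uniformly in $n$. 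Passing to the limit $n\to\infty$ (Aubin--Lions for the strong convergence needed to handle $G[\eta_*]g^\eps$, which is only weakly continuous a priori, though here it is in fact linear so weak convergence suffices) yields a solution $g^\eps\in Y^s([0,T])$. Uniqueness follows by applying the same energy identity to the difference of two solutions (whose data vanish), with $F=0$, and Gr\"onwall.

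The main obstacle, such as it is, is bookkeeping rather than conceptual: one must check that every index falls in the admissible range for the cited propositions — in particular that $s+1\le r$ so that $G[\eta_*]g^\eps$ makes sense in $\rH^{s}$ (equivalently $G[\eta_*]:\rH^{s+1}\to\rH^s$), and that the commutator/coercivity pairing in the energy estimate only costs $\|\eta_*\|_{W^{s+2,\infty}}$ (not a higher Sobolev norm of $\eta_*$), which is what \eqref{coercive} and Theorem \ref{theo:commutator} are tailored to deliver. Note the $\eps$-dependence of the $\rH^{s+1}$ bound is harmless here — the point of this lemma is merely existence in $Y^s$ for fixed $\eps$; the $\eps$-uniform estimate in $X^s_T$ (without the $\eps$ weight) is the separate analysis carried out in the proof of Proposition \ref{prop:g}.
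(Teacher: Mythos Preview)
Your proposal is correct, but the paper takes a different and somewhat more elementary route: it runs a Duhamel fixed-point argument in which $-G[\eta_*]g^\eps$ is treated purely as a forcing term. Concretely, the paper sets $\mathcal G(u)(t)=e^{tL_\eps}g_0+\int_0^t e^{(t-\tau)L_\eps}\bigl(-G[\eta_*]u(\tau)+F(\tau)\bigr)\,d\tau$ and shows $\mathcal G$ is a contraction on a ball in $Y^s([0,T_0])$ (equipped with the $\eps$-weighted norm $\|u\|_{C_tH^s}+\eps^{1/2}\|u\|_{L^2_tH^{s+1}}$) for $T_0$ small depending on $\eps$, using only the heat-semigroup bounds and the mapping property $\|G[\eta_*]v\|_{H^s}\le C(\|\eta_*\|_{H^r})\|v\|_{H^{s+1}}$ from Proposition~\ref{prop:DN:Sobolevcontinuity}; the solution is then extended to $[0,T]$ since $T_0$ is independent of the data. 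Crucially, neither the coercivity \eqref{coercive} nor any commutator estimate enters the paper's proof of this lemma---those tools are reserved for the $\eps$-uniform $X^s_T$ bound in Proposition~\ref{prop:g}. Your semigroup-perturbation route and your Galerkin/energy route are both valid alternatives; the latter has the advantage of producing the quantitative bound directly. One bookkeeping slip, however: the lemma assumes only $\eta_*\in H^r$, not $\eta_*\in W^{s+2,\infty}$, so Theorem~\ref{theo:commutator} is not available here (and in any case it is stated for $\partial^\alpha$ with $\alpha\in\Nn^d$, not for $\Lambda^s$). Your parenthetical alternative via Proposition~\ref{prop:DN:Sobolevcontinuity} is exactly the right fix---simply bound $|(G[\eta_*]g^\eps,\Lambda^{2s}g^\eps)_{L^2}|\le C(\|\eta_*\|_{H^r})\|g^\eps\|_{H^{s+1}}\|g^\eps\|_{H^s}$ and absorb into the $\eps$-dissipation, accepting an $\eps$-dependent Gr\"onwall constant, which (as you correctly observe) is harmless for the purpose of this lemma.
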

\begin{proof}
We equip $Y^s([0, T])$ with the norm 
\bq\label{def:Ynorm}
\| u\|_{Y^s([0, T])}=\| u\|_{C([0, T]; H^s)}+\eps^\mez \| u\|_{L^2([0, T]; H^{s+1})}.
\eq
 For a given $u\in Y^s([0, T])$, we let $j$ solve
\bq\label{eq:j}
    \partial_t j = L_\eps j,\quad j|_{t=0} = g_0,
\eq
and for any time $\tau\in [0, T]$, we  let $k_\tau(x, t)$ solve
\bq\label{eq:k}
    \partial_t k_\tau = L_\eps k_\tau,\quad k_\tau|_{t=\tau} = -G[\eta_*] u(\tau)+F(\tau).
\eq
Then we define
\begin{align*}
    \mathcal{G}(u)(t) = j(t) + \int_0^t k_\tau (t) d\tau,
\end{align*}
so that $g^\eps$ solves \eqref{modified} iff $g^\eps$ is a fixed point of $\mathcal{G}$. We note that $\int_{\T^d}\cG(u)(t)dx=0$ since $G[\eta_*]v$ has mean zero for any $v$.

By virtue of Proposition \ref{prop:DN:Sobolevcontinuity}, if $r>1+\frac{d}{2}$, $-\mez\le s\le r-1$, $\eta_*\in H^{s+1}(\T^d)$, and  $v\in H^r$, then $G[\eta_*]v\in H^s$ and
\bq\label{estG:Hs}
\| G[\eta_*]v\|_{H^s}\le C(\| \eta_*\|_{H^r})\| v\|_{H^{s+1}}.
\eq
 Note that $u(\tau)\in \rH^{s+1}$ for a.e. $t\in [0, T]$, whence $-G[\eta_*] u(\tau)+F(\tau)\in \rH^s$ for a.e. $t\in [0, T]$.  In addition, we have $(\p_t -L_\eps)u(x, t)=(\p_t-\Delta)v(x, t)$, $v(x, t)=u(x-\gamma e_1t, t)$. Therefore, the standard theory of the linear heat equation  in Sobolev spaces gives the existence and uniqueness of $j\in Y^s([0, T])$ and $k_\tau\in Y^s([\tau, T])$; moreover, we have 
\begin{align}\label{est:j}
    &\|j\|_{Y^s([0, T])} \leq M\|g_0\|_{H^s} \\ \nonumber
    &\|k_\tau(t)\|_{Y^s([\tau, T])} \leq M\|-G[\eta_*] u(\tau)+F(\tau)\|_{H^s}\\ 
    &\hspace{1in}\le C(\|\eta_*\|_{H^r})\| u(\tau)\|_{H^{s+1}}+M\| F(\tau)\|_{H^s}\quad\text{a.e. } \tau\in [0, T], \label{est:k}
\end{align}
where $M=M(s, d)>0$. 

Next, we bound $\int_0^t k_\tau(t) d\tau$ in $Y^s([0, T])$.  For any $t \in [0,T]$, using \eqref{est:k} gives
    \bq\label{estk:Y:1}
    \begin{aligned}
        \left\|\int_0^t k_\tau(t) d\tau\right\|_{H^s_x} &\leq \int_0^t \|k_\tau(t)\|_{H^s_x} d\tau \\
        &\leq \int_0^tC(\|\eta_*\|_{H^r})\| u(\tau)\|_{H^{s+1}}+M\| F(\tau)\|_{H^s}d\tau \\
        &\leq C(\|\eta_*\|_{H^r}) \sqrt{T} \|u\|_{L^2([0, T];  H^{s+1})}+M\sqrt{T}\| F\|_{L^2([0, T]; H^s)}.
    \end{aligned}
    \eq
    Set $H(t,\tau)=\|k_\tau(t)\|_{H^{s+1}_x}$ for $\tau\in [0, t]$.  Using Minkowski's inequality and \eqref{est:k}, we find
    \bq\label{estk:Y:2}
 \begin{aligned}
       \left\|\int_0^t k_\tau(t) d\tau\right\|_{L^2([0, T];  H^{s+1}_x)} 
        &\le \| \int_0^t H(t,\tau) d\tau \|_{L^2_t([0, T])} \\
        &= \|\|H(t,\tau)\chi_{[0, t]}(\tau)\|_{L^1_\tau([0,T])} \, \|_{L^2_t([0,T])} \\
        &\leq \|\,\|H(t,\tau)\chi_{[\tau,T]}(t)\|_{L^2_t([0,T])} \, \|_{L^1_\tau([0,T])} \\
        &= \|\,\|k_\tau(t)\|_{L^2_t([\tau,T]; H^{s+1}_x)} \, \|_{L^1_\tau([0,T])} \\
        &\leq \|\, \|k_\tau(t)\|_{Y^s([0, T])_{x, t}} \, \|_{L^1_\tau([0,T])} \\
        &\leq C(\|\eta_*\|_{H^r})\|\| u(\tau)\|_{H^{s+1}}\, \|_{L^1_\tau([0,T])}+M\|\| F(\tau)\|_{H^{s+1}}\, \|_{L^1_\tau([0,T])}  \\
        &\leq C(\|\eta_*\|_{H^r}) \sqrt{T} \|u\|_{L^2([0, T]; H^{s+1})}+M\sqrt{T}\| F\|_{L^2([0, T]; H^s)}.
    \end{aligned}
    \eq
It follows from \eqref{estk:Y:1} and \eqref{estk:Y:2} that 
\bq\label{estk:Y}
\left\|\int_0^t k_\tau(t) d\tau\right\|_{Y^s([0, T])_{x, t}}\le  C(\|\eta_*\|_{H^r}) \sqrt{T}(\eps^{-\mez}+1)\|u\|_{Y^s([0, T_0])}+M\sqrt{T}(1+\eps^\mez)\| F\|_{L^2([0, T]; H^s)}.
\eq
Combining \eqref{est:j} and \eqref{estk:Y} yields 
\bq\label{bound:cG}
\| \mathcal{G}(u)\|_{Y^s([0, T])}\le M\| g_0\|_{H^s}+C(\|\eta_*\|_{H^r}) \sqrt{T}(\eps^{-\mez}+1) \|u\|_{Y^s([0, T])} +2M\sqrt{T}\| F\|_{L^2([0, T]; H^s)}.
\eq
 Set $R=2M\| g_0\|_{H^s}$. Choosing 
\bq\label{cd:T:k}
T_0\le \min\left(\frac{1}{\big(4(\eps^{-\mez}+1)C(\|\eta_*\|_{H^r})\big)^2}, \frac{R^2}{\big(8M\| F\|_{L^2([0, T]; H^s)}\big)^2}\right)
\eq
we deduce from \eqref{bound:cG} that 
 $\cG: \overline{B}_R(0)\to \overline{B}_R(0)$, where $\overline{B}_R(0)$ denotes the closed ball of radius $R$ centered at the origin in $Y^s([0, T_0])$. 

To prove the contraction of $\cG$ we consider any $u_1$, $u_2\in \overline{B}_R(0)\subset Y^s([0, T_0])$. For $\tau\in [0, T_0]$, let  $k_{i, \tau}(x, t)$ be the solution of \eqref{eq:k} with $u$ replaced by $u_i$ and $k_{i, \tau}\vert_{t=\tau}=u_i(\tau)$, $i\in\{1, 2\}$. Since \eqref{eq:k} is linear, $k_\tau:=k_{1, \tau}-k_{2, \tau}$ solves \eqref{eq:k} with $u:=u_1-u_2$, $F=0$, and $k_{\tau}\vert_{t=\tau}=u(\tau)$. Therefore, the estimate \eqref{estk:Y} together with \eqref{cd:T:k} implies
\begin{align*}
\| \cG(u_1)-\cG(u_2)\|_{Y^s([0, T_0])}&= \left\|\int_0^t k_\tau(t) d\tau\right\|_{Y^s([0, T_0])_{x, t}}\\
&\le  C(\|\eta_*\|_{H^r}) \sqrt{T_0} (\eps^{-\mez}+1)\|u_1-u_2\|_{Y^s([0, T_0])}\\
& \le \frac14\|u_1-u_2\|_{Y^s([0, T_0])}.
\end{align*}
We conclude that $\cG$ is a contraction on $\overline{B}_R(0)\subset Y^s([0, T_0])$ provided $T_0$ satisfies \eqref{cd:T:k}. Thus $\cG$ has a unique fixed point $g^\eps$ in $\overline{B}_R(0)$, which solves \eqref{modified} on $[0, T_0]$. Finally, since the restriction \eqref{cd:T:k} depends only on the given $\eps>0$, $\eta_*\in H^r$, $g_0 \in H^s$, and $F\in L^2([0, T]; H^s)$, the solution $g^\eps$ can be extended to a unique solution in $Y^s([0, T])$. The proof of Lemma \ref{lemm:geps} is complete. 
\end{proof}
    \begin{proof}[Proof of Proposition \ref{prop:g}] Let $T>0$ be arbitrary. We assume that $s\in \mathbb{N}$, $\eta_*\in W^{s+2, \infty}(\T^d)\cap H^r(\T^d)$ with  $r>1+\frac{d}{2}$ and $s \le r-1$, and $F\in L^2([0, T]; \rH^{s-\mez}(\T^d))$. We first approximate $F$ by $F^\eps\in L^2([0, T]; \rH^s(\T^d))$, $\eps \in (0, 1)$. Then, Proposition \ref{prop:g} implies that for each $\eps \in (0, 1)$, the problem 
    \bq\label{eq:geps}
      \partial_t g^\eps= L_\eps g^\eps - G[\eta_*]g^\eps+F^\eps, \quad g^\eps|_{t=0} = g_0
    \eq
     has a unique solution $g^\eps\in Y^s([0, T])$.  The proof proceeds in two steps. 
    
    {\it  Step 1.} We prove uniform-in-$\eps$ estimates for $g^\eps$ in $X^s_T\supset Y^s([0, T])$. 
    
     By virtue of the coercive estimate \eqref{coercive} for $G[\eta_*]$, we have 
     \bq\label{coercive:2}
     (G[\eta_*]v, v)_{L^2, L^2}\ge c_0\| v\|_{H^\mez}^2,\quad c_0 = c_0(\|\eta_*\|_{W^{1,\infty}}),
     \eq
     provided $v$ has mean zero. Thus an $L^2$ estimate for \eqref{modified} yields
     \bq\label{est:g:L2}
    \begin{aligned}
        \frac12 \frac{d}{dt} \|g^\eps\|^2_{L^2} &= (\partial_t g^\eps, g^\eps)_{L^2, L^2} 
        = (\gamma \partial_1 g^\eps+\eps \Delta g^\eps - G[\eta_*]g^\eps+F^\eps, g^\eps)_{L^2, L^2}\\
        &=-\eps \| \na g^\eps\|_{L^2}^2 -(G[\eta_*] g^\eps, g^\eps)_{L^2, L^2} +(F^\eps, g^\eps)_{L^2, L^2}\\
        &\leq -\eps \| \na g^\eps\|_{L^2}^2-c_0\|g^\eps\|^2_{H^\frac12}+\| F^\eps\|_{H^{-\mez}}\| g^\eps \|_{H^\mez}\\
        & \leq  -\eps \| \na g^\eps\|_{L^2}^2-\frac{c_0}{2}\|g^\eps\|^2_{L^2}+\frac{1}{2c_0}\| F^\eps\|_{H^{-\mez}}^2,
    \end{aligned}
    \eq
    where we have used the fact that $g^\eps$ has mean zero.
    
    Let $\alpha\in \mathbb{N}^d$ be an arbitrary multiindex of order $s$, $|\alpha|=s$. Applying $\p^\alpha$ to \eqref{modified}, then multiplying the resulting equation by $\p^\alpha g^\eps$, we obtain
    \bq\label{Hs:est:g:1}
    \begin{aligned}
        \frac12 \frac{d}{dt} \|\partial^\alpha g^\eps\|^2_{L^2} & = (\gamma \partial_1 \partial^\alpha g^\eps+\eps\Delta g^\eps - \partial^\alpha G[\eta_*] g^\eps+\p^\alpha F^\eps, \partial^\alpha g^\eps)_{L^2, L^2}\\
        &=-\eps\| \na \p^\alpha g^\eps\|_{L^2}^2 -(G[\eta_*] \partial^\alpha g^\eps, \partial^\alpha g^\eps)_{L^2, L^2} + \left([G[\eta_*], \partial^\alpha] g^\eps, \partial^\alpha g^\eps\right)_{L^2, L^2}\\
        &\qquad+(\partial^\alpha F^\eps, \partial^\alpha g^\eps)_{L^2, L^2}.
        \end{aligned}
        \eq
        Clearly 
        \bq\label{Cauchy:Fg}
        |(\partial^\alpha F^\eps, \partial^\alpha g^\eps)_{L^2, L^2}|\le \| F^\eps\|_{H^{s-\mez}}\| g^\eps\|_{H^{s+\mez}}\le \frac{c_0}{4}\| g^\eps\|_{H^{s+\mez}}+\frac{1}{c_0}\| F^\eps\|^2_{H^{s-\mez}}.
        \eq
        Now we apply the commutator estimate \eqref{commutator:est} with $\sigma=\mez$ to have
     \[
        \begin{aligned}
        \left|\left([G[\eta_*], \partial^\alpha] g^\eps, \partial^\alpha g^\eps\right)_{L^2, L^2}\right|&\le \|[G[\eta_*], \partial^\alpha] g^\eps\|_{H^\frac12} \|\partial^\alpha g^\eps\|_{H^{-\frac12}}\\
        &\le C(\|\eta_*\|_{W^{s+2,\infty}})\| g^\eps\|_{H^{s+\mez}}\| g^\eps\|_{H^{s-\mez}}
        \end{aligned}
        \]
        By interpolating $\| g^\eps\|_{H^{s-\mez}}$ between $\| g^\eps\|_{H^{s+\mez}}$ and $\| g^\eps\|_{L^2}$, and applying Young's inequality, we deduce 
 \bq\label{Hs:est:g:2}
  \left|\left([G[\eta_*], \partial^\alpha] g^\eps, \partial^\alpha g^\eps\right)_{L^2, L^2}\right|\le \frac{c_0}{4}\| g^\eps\|_{H^{s+\mez}}^2+ C(\|\eta_*\|_{W^{s+2,\infty}})\| g^\eps\|_{L^2}^2
 \eq    
        Combining \eqref{Hs:est:g:1}, \eqref{Cauchy:Fg}, \eqref{Hs:est:g:2}, and the coercive estimate \eqref{coercive:2}, we obtain 
        \bq\label{eest:geps}
        \begin{aligned}
        \frac12 \frac{d}{dt} \|\partial^\alpha g^\eps\|^2_{L^2}
        &\leq -\eps\| \na \p^\alpha g^\eps\|_{L^2}^2-\frac{c_0}{2} \|g^\eps\|^2_{H^{s+\frac12}} + c_1 \|g^\eps\|^2_{L^2}+c_1\| F^\eps\|_{H^{s-\mez}}^2.
    \end{aligned}
    \eq
    where $c_1 = c_1(\|\eta_*\|_{W^{s+2,\infty}})$. 
    Summing the preceding inequality over all $|\alpha| = s$ yields
    \bq\label{est:g:Hs}
        \frac12 \frac{d}{dt} \sum_{|\alpha| = s}\|\partial^\alpha g^\eps\|^2_{L^2} \leq -M\eps\| \na g^\eps\|_{H^s}^2- c'_0\|g^\eps\|^2_{H^{s+\frac12}} + C_1 \|g^\eps\|^2_{L^2}+C_1\| F^\eps\|^2_{H^{s-\mez}},
    \eq
    where $M=M(d, s)$, $c'_0 = c'_0(\|\eta_*\|_{W^{1,\infty}}, d, s)$, and $C_1= C_1(\|\eta_*\|_{W^{s+2, \infty}}, d, s)$. If we choose $A$ such that $A\frac{c_0}{2} > C_1$, then it follows from \eqref{est:g:L2} and \eqref{est:g:Hs} that
\bq\label{energy:geps}
        E(t) := \frac12 A \|g^\eps\|^2_{L^2} + \frac12  \sum_{|\alpha| = s}\|\partial^\alpha g^\eps\|^2_{L^2}
    \eq
    %
    satisfies 
    \bq\label{eest:geps}
        E'(t) \leq - c'_0\|g^\eps\|^2_{H^{s+\frac12}}+C_1\| F^\eps\|^2_{H^{s-\mez}} 
    \eq 
    Since $E(t)$ and $\| g^\eps(t)\|_{H^s}^2$ are comparable, integrating the preceding differential inequality yields
    \bq\label{uniest:geps}
    \|g^\eps\|_{X^s_T} \leq C(\|\eta_*\|_{W^{s+2, \infty}})\left(\|g_0\|_{H^s}+\| F^\eps\|_{L^2([0, T]; H^{s-\mez})}\right).
    \eq
    We note that $\| F^\eps\|_{L^2([0, T]; H^{s-\mez})}$ is bounded uniformly in $\eps$.
    
{\it Step 2.} We prove contraction estimates for $g^\eps$ in $X^s_T$. Consider $0<\eps'<\eps<1$ and set $g_\sharp=g^\eps-g^{\eps'}$. Then $g_\sharp$ satisfies 
\bq\label{eq:gsharp}
\begin{cases}
\p_tg_\sharp=L_\eps g_\sharp-G[\eta_*]g_\sharp +F_\sharp+\wt{F},\quad F_\sharp:=F^\eps-F^{\eps'},~\wt{F}:=(\eps-\eps')\Delta g^{\eps'},\\
g_\sharp\vert_{t=0}=0,
\end{cases}
\eq
Although \eqref{eq:gsharp} is of the same form as \eqref{eq:g:2}, we cannot directly apply the results in Step 1 because $\wt{F}$ only belongs to $L^2([0, T]; H^{s-1})$. We shall modify  the energy estimates in Step 1 to handle the less regular  forcing term $\wt{F}$.  The idea is to use the dissipation term $\eps \Delta$ instead of $-G[\eta_*]$. By integration by parts and Young's inequality, we have
\begin{align*}
&|(g_\sharp,  \wt{F})_{L^2, L^2}|=|(\na g_\sharp,  (\eps-\eps')\na g^{\eps'})_{L^2, L^2}|\le \frac{\eps}{2}\|\na g_\sharp\|_{L^2}^2+ \frac{(\eps-\eps')^2}{2\eps }\|\na g^{\eps'}\|_{L^2}^2,\\
&\sum_{|\alpha|=s}\left|(\p^\alpha g_\sharp, \p^\alpha \wt{F})_{L^2, L^2}\right|=\sum_{|\alpha|=s}\left|(\p^\alpha \na g_\sharp,   (\eps-\eps')\na \p^\alpha g^{\eps'})_{L^2, L^2}\right|\\
&\hspace{1.5in}\le \frac{M}{2}\eps\|\na g_\sharp\|_{H^s}^2+ M'\frac{(\eps-\eps')^2}{\eps}\| g^{\eps'}\|_{H^{s+1}}^2,
\end{align*}
where $M'=M'(s, d)$. Inserting the preceding estimates in \eqref{est:g:L2} and \eqref{est:g:Hs}, we find that  the energy 
\begin{align*}
        E_\sharp(t) = \frac12 A \|g_\sharp|^2_{L^2} + \frac12  \sum_{|\alpha| = s}\|\partial^\alpha g_\sharp\|^2_{L^2}
    \end{align*}
   satisfies 
   \bq\label{eineq:gsharp}
     E_\sharp'(t) \leq - c'_0\|g_\sharp\|^2_{H^{s+\frac12}}+C_1\| F_\sharp\|^2_{H^{s-\mez}} +M''\frac{(\eps-\eps')^2}{\eps}\| g^{\eps'}\|_{H^{s+1}}^2
     \eq
     provided $A\frac{c_0}{2}>C_1$ as before. Integrating \eqref{eineq:gsharp} and invoking the uniform bound \eqref{uniest:geps}, we deduce 
     \begin{align*}
     \| g_\sharp\|_{X^s_T}^2&\le C(\|\eta_*\|_{W^{s+2, \infty}})\left\{\| F_\sharp\|_{L^2([0, T]; H^{s-\mez})}^2+\frac{(\eps-\eps')^2}{\eps}\| g^{\eps'}\|_{L^2([0, T]; H^{s+1})}^2\right\}\\
     &\le C(\|\eta_*\|_{W^{s+2, \infty}})\left\{\| F_\sharp\|_{L^2([0, T]; H^{s-\mez})}^2+\eps\left(\|g_0\|_{H^s}^2+\| F^{\eps'}\|^2_{L^2([0, T]; H^{s-\mez})}\right)\right\}.
     \end{align*}
     Consequently as $0<\eps'<\eps\to 0$, we have $\| g_\sharp\|_{X^s_T}\to 0$. Therefore, $g_\eps$ converges to some $g$ in $X^s_T$, and $g$ satisfies the bound \eqref{est:g:Xs} upon letting $\eps\to 0$ in \eqref{uniest:geps}.
     
   The convergence $g^\eps\to g$ in $C([0, T]; H^s)$ implies $g\vert_{t=0}=g_0$. On the other hand, using the convergence  $g^\eps\to g$ in $L^2([0, T]; H^{s+1})$ and the linear estimate \eqref{estG:Hs}, we deduce that 
   \begin{align*}
  & G[\eta_*]g^\eps\to G[\eta_*]g\quad\text{in } L^2([0, T]; H^s),\quad \gamma \p_1g^\eps\to \gamma \p_1g\quad\text{in } L^2([0, T]; H^s),\\
   & \eps \Delta g^\eps\to 0\quad\text{in } L^2([0, T]; H^{s-1}),
    \end{align*}
     and $\p_t g^\eps \to \p_t g$ in the distributional sense. Thus letting $\eps\to 0$ in \eqref{modified} we obtain that $g$ is a solution of \eqref{eq:g:2}. Since \eqref{eq:g:2} is linear, the uniqueness of $g$ is a direct consequence of the estimate \eqref{est:g:Xs}.
\end{proof}
\begin{rema}
(i) With a variant of Theorem \ref{theo:commutator} for the whole space $\Rr^d$ in place of $\T^d$, Proposition \ref{prop:g} also holds in $\Rr^d$. 

(ii) The differential energy inequality \eqref{eest:geps} will be used to deduce the asymptotic stability in Theorem \ref{theo:stability}.
\end{rema}
\section{Asymptotic stability of large traveling waves}\label{sec:stability}
Let $\eta_*$ be a traveling wave with speed $\gamma$, i.e. $(\eta_*, \gamma)$ solves \eqref{eq:tw}. Our goal is to prove that $\eta_*$ is stable in Sobolev spaces provided it is close enough to $-\phi$. To that end, we let $f$ denote the perturbation 
    \[
        f(x,t) = \eta(x,t)-\eta_*(x),
    \]
    where $\eta$ solves the dynamic problem \eqref{eq:eta}, i.e.
    \begin{equation} \label{eq:eta:2}
            \partial_t\eta - \gamma \partial_1  \eta = - G[\eta](\eta + \phi).
    \end{equation}
    Then $f$ satisfies 
    \begin{equation}\label{fdt}
    \begin{aligned}
   \partial_t f &= \gamma\partial_1 f -G[\eta](\eta+\phi) + G[\eta_*](\eta_*+\phi)\\
   &= \gamma\partial_1 f -G[f+\eta_*](f+\eta_*+\phi) + G[\eta_*](\eta_*+\phi)\\
   &= \gamma\partial_1 f -G[\eta_*]f +\left\{G[\eta_*]f-G[f+\eta_*]f\right\}+ \left\{ G[\eta_*](\eta_*+\phi)-G[f+\eta_*](\eta_*+\phi)\right\}.
    \end{aligned}
    \end{equation}
Our main result of this section is the following. 
\begin{theo}[{\bf Asymptotic Stability}]\label{theo:stability}
    Let $\Nn\ni s > 1 + \frac{d}{2}$,  and assume that $\phi \in H^{s+\frac12}(\T^d)$ and $\eta_* \in W^{s+2, \infty}(\T^d)$.  There exists a nonincreasing function (in each argument) $\om: \Rr_+\times\Rr_+\to \Rr_+ $ and a function $C_* : \Rr_+\times \Rr_+\to \Rr_+$ depending only on $(s, d, b)$ such that if 
    \[
\|\eta_*+\phi\|_{H^{s+\frac12}} < \om(\|\eta_*\|_{W^{s+2,\infty}}, \|\phi\|_{H^s})
    \]
    then  the following holds. For any number  $0<\delta < \om(\|\eta_*\|_{W^{s+2,\infty}}, \|\phi\|_{H^s})$, if $f_0\in \rH^s(\T^d)$ and 
    \[
    \|f_0\|_{H^s} < \frac{\delta}{C_*(\|\eta_*\|_{W^{s+2,\infty}}, \|\phi\|_{H^s})}
    \]
    then  
    the problem
    \bq\label{perturb}
    \begin{cases}
        \partial_t f = \gamma\partial_1 f -G[\eta_*]f +\left\{G[\eta_*]f-G[f+\eta_*]f\right\}+ \left\{ G[\eta_*](\eta_*+\phi)-G[f+\eta_*](\eta_*+\phi)\right\}, \\
        f\vert_{t=0} = f_0
    \end{cases}
    \eq
    has a unique solution $f$ in $X^s_T$  for all $T > 0$. The Banach space $X^s_T$ is defined as in \eqref{def:Xs}. Moreover, there exist positive constants  $C = C(\|\eta_*\|_{W^{s+2,\infty}}, \|\phi\|_{H^s})$, $C_0=C_0(\| \eta_*\|_{W^{s+2, \infty}})$ and $c_0 = c_0(\|\eta_*\|_{W^{s+2,\infty}})$ such that
\begin{align}\label{globalbound:f}        &\|f\|_{X^s_T}\le C\| f_0\|_{H^s},\quad T > 0,\\ \label{decay:est}
&\| f(t)\|_{H^s}\le C_0\| f_0\|_{H^s}e^{-c_0t},\quad t>0.
    \end{align}
\end{theo}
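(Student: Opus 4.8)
The plan is to treat \eqref{perturb} as a semilinear perturbation of the dissipative linear flow of Section~\ref{sec:lin}. Write it as $\partial_t f=\cL f+F(f)$, $f|_{t=0}=f_0$, with $\cL=\gamma\partial_1-G[\eta_*]$ and
\[
F(f)=N_1(f)+N_2(f),\quad N_1(f):=G[\eta_*]f-G[f+\eta_*]f,\quad N_2(f):=\big(G[\eta_*]-G[f+\eta_*]\big)(\eta_*+\phi).
\]
The first task is to bound $F$ in $\rH^{s-\mez}(\T^d)$, the forcing space appearing in \eqref{est:g:Xs}. I would apply Proposition~\ref{prop:DN:Sobolevcontraction} with its index ``$s$'' taken to be $s+\mez$ and its index ``$s_0$'' taken to be our $s$ (legitimate since $s>1+\tfrac d2$, $\eta_*\in W^{s+2,\infty}(\T^d)\hookrightarrow H^{s+2}(\T^d)$, and $\eta_*+\phi\in H^{s+\mez}(\T^d)$); writing $\eta_1-\eta_2=\mp f$ and using the embedding $H^{s+\mez}(\T^d)\hookrightarrow H^s(\T^d)$ to absorb lower-order contributions into the constants, this yields
\[
\|N_1(f)\|_{H^{s-\mez}}\le \mathcal C_1\big(\|f\|_{H^s},\|\eta_*\|_{W^{s+2,\infty}}\big)\,\|f\|_{H^s}\,\|f\|_{H^{s+\mez}},
\]
\[
\|N_2(f)\|_{H^{s-\mez}}\le \mathcal C_2\big(\|f\|_{H^s},\|\eta_*\|_{W^{s+2,\infty}},\|\phi\|_{H^s}\big)\,\|\eta_*+\phi\|_{H^{s+\mez}}\,\|f\|_{H^{s+\mez}},
\]
together with the analogous bilinear estimates for $N_i(f_1)-N_i(f_2)$. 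The decisive structural point, which is exactly what permits $\eta_*$ to be large, is that $N_1$ carries the \emph{small} factor $\|f\|_{H^s}$ (it is genuinely quadratic), while $N_2$, although only \emph{linear} in $f$, carries the \emph{small} factor $\|\eta_*+\phi\|_{H^{s+\mez}}$.

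Next I would obtain global existence together with \eqref{globalbound:f} by a fixed-point argument whose constants are \emph{independent of $T$}. Given $g\in X^s_T$, let $\Phi(g)$ be the solution of $\partial_t h=\cL h+F(g)$, $h|_{t=0}=f_0$, furnished by Proposition~\ref{prop:g} (note $F(g)$ is mean-zero since $G[\cdot]v$ always is). Integrating the nonlinear estimates in time — using $\|g\|_{L^\infty_tH^s}\le\|g\|_{X^s_T}$ and $\|g\|_{L^2_tH^{s+\mez}}\le\|g\|_{X^s_T}$, so that no power of $T$ is produced — and invoking \eqref{est:g:Xs} gives
\[
\|\Phi(g)\|_{X^s_T}\le C\big(\|\eta_*\|_{W^{s+2,\infty}}\big)\|f_0\|_{H^s}+C\big(\|\eta_*\|_{W^{s+2,\infty}}\big)\Big(\mathcal C_1\|g\|_{X^s_T}+\mathcal C_2\|\eta_*+\phi\|_{H^{s+\mez}}\Big)\|g\|_{X^s_T},
\]
and similarly $\|\Phi(g_1)-\Phi(g_2)\|_{X^s_T}\le C\big(\mathcal C_1\|g_1\|_{X^s_T}+\mathcal C_1\|g_2\|_{X^s_T}+\mathcal C_2\|\eta_*+\phi\|_{H^{s+\mez}}\big)\|g_1-g_2\|_{X^s_T}$. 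Working in the closed ball of radius $\rho:=2C(\|\eta_*\|_{W^{s+2,\infty}})\|f_0\|_{H^s}$ in $X^s_T$, and choosing $\om$ small enough and $C_*$ large enough (both depending on $\|\eta_*\|_{W^{s+2,\infty}}$ and $\|\phi\|_{H^s}$) that the hypotheses $\|\eta_*+\phi\|_{H^{s+\mez}}<\om$ and $\|f_0\|_{H^s}<\delta/C_*<\om/C_*$ force the bracketed prefactors to be $\le\tfrac12$, the map $\Phi$ is a contraction of this ball into itself for \emph{every} $T>0$ simultaneously. Its fixed point defines a global solution of \eqref{perturb} with $\|f\|_{X^s_T}\le\rho$ for all $T$, which is \eqref{globalbound:f}; consistency of the solutions on different time intervals follows from uniqueness. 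Uniqueness in all of $X^s_T$ (not merely in the ball) follows by running the same contraction on a short interval $[0,T_1]$ — on which $\|f_i\|_{L^2H^{s+\mez}}$ is as small as desired — and noting that $\{t:\ f_1\equiv f_2\ \text{on}\ [0,t]\}$ is open and closed.

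For the decay estimate \eqref{decay:est} I would repeat, for the genuine solution $f$, the energy computation of Section~\ref{sec:lin} that leads to \eqref{eest:geps}: applying $\partial^\alpha$ with $|\alpha|=s$ to \eqref{perturb}, pairing with $\partial^\alpha f$, and using the coercivity \eqref{coercive} of $G[\eta_*]$ together with the commutator bound \eqref{commutator:est} with $\sigma=\mez$ (which consumes precisely $\|\eta_*\|_{W^{s+2,\infty}}$), one obtains for $E(t)\simeq\|f(t)\|_{H^s}^2$ the inequality $E'(t)\le -c_0'\|f\|_{H^{s+\mez}}^2+C_1\|F(f)\|_{H^{s-\mez}}^2$, with $c_0'=c_0'(\|\eta_*\|_{W^{1,\infty}})$ and $C_1=C_1(\|\eta_*\|_{W^{s+2,\infty}})$. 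Substituting the two nonlinear estimates and the bound $\|f(t)\|_{H^s}\le\rho$, the term $C_1\|F(f)\|_{H^{s-\mez}}^2$ is $\le C_1\big(\mathcal C_1^2\rho^2+\mathcal C_2^2\|\eta_*+\phi\|_{H^{s+\mez}}^2\big)\|f\|_{H^{s+\mez}}^2$; after shrinking $\om$ (hence $\rho$ and $\|\eta_*+\phi\|_{H^{s+\mez}}$) so that the parenthesis is $\le c_0'/(2C_1)$, the inequality becomes $E'(t)\le-\tfrac{c_0'}{2}\|f\|_{H^{s+\mez}}^2\le-\tfrac{c_0'}{\sqrt2}\|f\|_{H^s}^2\le -c_0\,E(t)$, where in the middle step I use that $f$ is mean-zero. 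Gronwall's lemma gives $\|f(t)\|_{H^s}\le C_0\|f_0\|_{H^s}e^{-c_0t}$, with $c_0,C_0$ depending only on $\|\eta_*\|_{W^{s+2,\infty}}$ and $(s,d,b)$. (The $H^s$-energy identity is a priori only formal, since $\partial_t f$ lies merely in $L^2_tH^{s-\mez}$; this is handled by a standard Friedrichs mollification, or equivalently by passing to the limit in the energy identities for the contraction iterates.)

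The hardest point is the nonlinear bound on $N_1$: its natural target norm $H^{s-\mez}$ forces $f$ into exactly the dissipation space $H^{s+\mez}$, so the estimate is \emph{critical}, and closing it relies essentially on (i) the \emph{tame} form of Proposition~\ref{prop:DN:Sobolevcontraction}, in which the top-order Sobolev norm enters each slot only linearly, and (ii) the genuine quadratic gain $\|f\|_{H^s}$, which must be carefully tracked so that every constant depends on $\|\eta_*\|_{W^{s+2,\infty}}$ alone and never on higher Sobolev norms of $\eta_*$ — this is precisely what makes the scheme compatible with arbitrarily large traveling waves.
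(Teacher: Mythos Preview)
Your proposal is correct and follows essentially the same route as the paper: the same decomposition $F(f)=N_1(f)+N_2(f)$, the same application of Proposition~\ref{prop:DN:Sobolevcontraction} with indices $(s_0,s)\mapsto(s,s+\tfrac12)$ to exploit the quadratic smallness of $N_1$ and the $\|\eta_*+\phi\|_{H^{s+\frac12}}$ smallness of $N_2$, the same $T$-independent contraction in $X^s_T$ via Proposition~\ref{prop:g}, and the same energy argument based on \eqref{eest:geps} to obtain the exponential decay. Your treatment of uniqueness outside the ball and of the justification of the $H^s$ energy identity is slightly more careful than the paper's, and the stray factor $\tfrac{1}{\sqrt2}$ in your decay step is an inessential slip (for mean-zero $f$ one simply has $\|f\|_{H^{s+\frac12}}\ge\|f\|_{H^s}$).
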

\begin{proof}
The proof proceeds in two steps. 

{\it Step 1.} Global existence of $f$. We first  rewrite \eqref{perturb} in the abstract form
\bq\label{eq:f:2}
    \partial_t f = \cL f + N(f),\quad f |_{t=0} = f_0,
\eq
where  $\cL$ is the linear operator \eqref{def:cL} and $N$ is the nonlinear operator
\bq\label{def:N(f)}
    N(f) = \left\{G[\eta_*]f-G[f+\eta_*]f\right\}+ \left\{ G[\eta_*](\eta_*+\phi)-G[f+\eta_*](\eta_*+\phi)\right\}.
\eq
 We will conduct a contraction mapping argument that results in the existence of a solution $f$ of \eqref{eq:f:2} in $X^s_T$ for all $T>0$. To that end, we fix $T>0$, $f_0\in \rH^s(\T^d)$, and $f\in X^s_T$, where $\| f\|_{C(\T^d\times [0, T])}$ is sufficiently small so that $f+\eta_*>-b$ in the finite depth case. Let $g$ and $h$ be the solutions of the problems
\bq\label{eq:g}
    \partial_t g = \cL g,\quad g\vert_{t=0} = f_0\in \rH^s(\T^d)
\eq
and
\bq\label{eq:h}
    \partial_t h = \cL h + N(f),\quad h\vert_{t=0} = 0.
\eq
Then we define the map 
\[
   X^s_T\ni f\mapsto  \mathcal{F}(f):= g + h,
\]
so that
\[
    \partial_t\mathcal{F}(f) = \cL\mathcal{F}(f) + N(f), \quad\mathcal{F}(f)|_{t=0} = f_0.
\]
Therefore, $f$ is a solution of \eqref{perturb} iff $f$ is a fixed point of $\cF$. Let us show that $\cF$ is well-defined. Since we assume $\eta_*\in W^{s+2, \infty}(\T^d)$ with $s>1+\frac{d}{2}$, we have $\eta_*\in H^{s+2}(\T^d)$ with $s+2>3+\frac{d}{2}$, so $\eta_*$ satisfies the assumptions of Proposition \ref{prop:g}. Therefore, by virtue of Proposition \ref{prop:g}, $g$ is well-defined in $X^s_T$ and so is $h$ provided $N(f)\in L^2([0, T]; \rH^{s-\mez})$. To verify this  we apply the contraction estimate \eqref{Sobolevcontraction} with $(s_0, s)$ replaced by $(s, s+\mez)$ to have
\bq\label{est:N(f):0}
\begin{aligned}
    \|N(f)\|_{H^{s-\frac12}} &= \left\|\left\{G[\eta_*]f-G[f+\eta_*]f\right\}+ \left\{ G[\eta_*](\eta_*+\phi)-G[f+\eta_*](\eta_*+\phi)\right\}\right\|_{H^{s-\frac12}} \\
    &\leq \wt C(\|\eta_*\|_{H^{s}}, \|f+\eta_*\|_{H^{s}})\left\{ \|f\|_{H^s}\|f\|_{H^{s+\frac12}}+\|f\|_{H^s}^2\big(\| \eta_*\|_{H^{s+\mez}}+\| f+\eta_*\|_{H^{s+\mez}}\big)\right\}\\
    &\qquad+ \wt C(\|\eta_*\|_{H^{s}}, \|f+\eta_*\|_{H^{s}})\left\{\|f\|_{H^{s+\mez}}\|\eta_*+\phi\|_{H^{s+\frac12}}\right.\\    &\hspace{2in}\left.+\| f\|_{H^s}\| \eta_*+\phi\|_{H^s}\big(\| \eta_*\|_{H^{s+\mez}}+\| f+\eta_*\|_{H^{s+\mez}}\big)\right\}\\
    &\le \wt{C}_1(\| \eta_*\|_{H^{s+\mez}}, \|\phi\|_{H^s}, \| f\|_{H^s})\left\{\|f\|_{H^s}\|f\|_{H^{s+\frac12}}+ \|f\|_{H^{s+\mez}}\|\eta_*+\phi\|_{H^{s+\frac12}}\right\}.
\end{aligned}
\eq
Note that $s>1+\frac{d}{2}$ suffices.  It follows that
\bq\label{est:N(f)}
\begin{aligned}
\| N(f)\|_{L^2([0, T]; H^{s-\mez})}&\le \wt{C}_1(\| \eta_*\|_{H^{s+\mez}}, \|\phi\|_{H^s}, \| f\|_{L^\infty([0, T], H^s)})\left\{\| f\|_{L^\infty([0, T]; H^s)}\|f\|_{L^2([0, T]; H^{s+\frac12})}\right.\\
&\hspace{1in}\left. +\|f\|_{L^2([0, T]; H^{s+\frac12})}\|\eta_*+\phi\|_{H^{s+\frac12}}\right\}\\
&\le \wt{C}_1(\| \eta_*\|_{H^{s+\mez}}, \|\phi\|_{H^s}, \| f\|_{X^s_T})\left\{\|f\|_{X^s_T}^2+\|\eta_*+\phi\|_{H^{s+\frac12}}\| f\|_{X^s_T}\right\}<\infty.
\end{aligned}
\eq
In addition, $\int_{\T^d}N(f)dx=0$ because $\int_{\T^d}G[\eta]vdx=0$ for all $v$. Thus $N(f)\in L^2([0, T]; \rH^{s-\mez})$ as claimed, whence $\cF$ is well-defined. Moreover, the estimate \eqref{est:g:Xs} gives 
\begin{align}\label{est:g:10}
&\| g\|_{X^s_T}\le C(\| \eta_*\|_{W^{s+2, \infty}})\| f_0\|_{H^s},\\ \label{est:h:10}
&\| h\|_{X^s_T}\le C(\| \eta_*\|_{W^{s+2, \infty}})\| N(f)\|_{L^2([0, T]; H^{s-\mez})}.
\end{align}
Now we restrict $\mathcal{F}$ to a ball $\overline{B}_\delta(0)\subset X^s_T$ for some $0 < \delta < 1$. Then it follows from the estimates \eqref{est:g:10}, \eqref{est:h:10}, and \eqref{est:N(f)} that for some $C_1=C_1(\| \eta_*\|_{W^{s+2, \infty}}, \|\phi\|_{H^s})$, 
\bq\label{F(f):bound}
\begin{aligned}
    \|\mathcal{F}(f)\|_{X^s_T} &\leq \|g\|_{X^s_T} + \|h\|_{X^s_T} \\
    &\leq C_1 \left\{\|f_0\|_{H^s} +\|f\|^2_{X^s_T} + \|\eta_* + \phi\|_{H^{s+\frac12}}\|f\|_{X^s_T}\right\} \\
    &\leq C_1 \left\{\|f_0\|_{H^s} + \delta^2 + \|\eta_* + \phi\|_{H^{s+\frac12}} \delta\right\}.
\end{aligned}
\eq
We  assume that 
\bq\label{cd:etaphi:1}
\|\eta_* + \phi\|_{H^{s+\frac12}} < \frac{1}{3C_1}
\eq
and choose $\delta$ and $f_0$ satisfying
\bq\label{cd:deltaf0:1}
    \delta < \frac{1}{3C_1}, \quad \|f_0\|_{H^s} < \frac{\delta}{3C_1},
\eq
so that $\mathcal{F}$ maps $\overline{B}_\delta(0)\subset X^s_T$ into itself.

Next, we show the contraction of $\cF$. Suppose that $f_j\in \overline{B}_\delta(0)\subset X^s_T$. We note that $\mathcal{F}(f_1) - \mathcal{F}(f_2)=h_1-h_2$, where $h_j$ solves \eqref{eq:h} with the right-hand side $N(f_j)$. Since $h: = h_1 - h_2$ solves
\[
    \partial_t h = \cL h + N(f_1) - N(f_2),\quad h|_{t=0} = 0,
\]
The estimate \eqref{est:g:Xs} implies
\bq\label{esth:Xs:0}
    \|h\|_{X^s_T} \leq C(\|\eta_*\|_{W^{s+2, \infty}})\|N(f_1)-N(f_2)\|_{L^2([0, T]; H^{s-\frac12})}.
\eq
In order to bound $N(f_1)-N(f_2)$, we first rewrite 
\begin{align*}
N(f_1)-N(f_2)&=\left\{G[\eta_*](f_1-f_2) - G[f_1+\eta_*](f_1-f_2)\right\} -\left\{G[f_1+\eta_*]f_2-G[f_2+\eta_*]f_2\right\}\\
&\qquad-\left\{G[f_1+\eta_*](\eta_*+\phi)-G[f_2+\eta_*](\eta_*+\phi)\right\},
\end{align*}
so that each term on the right-hand side has the form $G[\eta_1]k-G[\eta_2]k$.  Applying the contraction estimate \eqref{Sobolevcontraction} with $(s_0, s)$ replaced by $(s, s+\mez)$, we obtain as in \eqref{est:N(f):0} that
\begin{align*}
    \|N(f_1)-N(f_2)\|_{H^{s-\frac12}} 
    &\leq \wt{C}_2(\|\eta_*\|_{H^{s+\mez}},  \|\phi\|_{H^s})\left\{(\|f_1\|_{H^s}+\|f_2\|_{H^s})\|f_1-f_2\|_{H^{s+\frac12}} \right. \\
    & \qquad \left. + (\|f_1\|_{H^{s+\frac12}}+\|f_2\|_{H^{s+\mez}})\|f_1-f_2\|_{H^s}+ \|f_1-f_2\|_{H^{s+\mez}}\| \eta_*+\phi\|_{H^{s+\mez}}\right\}.
\end{align*}
Taking the $L^2$ norm in time and invoking \eqref{esth:Xs:0}, we obtain for some $C_2=C_2(\|\eta_*\|_{W^{s+2, \infty}}, \|\phi\|_{H^s})$ that 
\bq\label{contraction:N(f)}\begin{aligned}
    \| h\|_{X^s_T} &\leq C_2\left\{2(\|f_1\|_{X^s_T} + \|f_2\|_{X^s_T}) \|f_1-f_2\|_{X^s_T}\right.\\
    &\qquad\left.+\| \eta_*+\phi\|_{H^{s+\mez}} \|f_1-f_2\|_{X^s_T}\right\}\\
    &\leq C_2\left(4\delta\|f_1-f_2\|_{X^s_T}+\| \eta_*+\phi\|_{H^{s+\mez}} \|f_1-f_2\|_{X^s_T}\right).
\end{aligned}
\eq
In view of \eqref{cd:etaphi:1}, \eqref{cd:deltaf0:1}, \eqref{contraction:N(f)}, we impose 
\bq\label{smallnesscds:existence}
\begin{aligned}
    &\|\eta_* + \phi\|_{H^{s+\frac12}} <\min( \frac{1}{3C_1}, \frac{1}{3C_2}),\\
    &\delta < \min(\frac{1}{3C_1}, \frac{1}{12C_2}), \quad \|f_0\|_{H^s} < \frac{\delta}{3C_1},
\end{aligned}
\eq
so that $\mathcal{F}$ is a contraction on $\overline{B}_\delta(0)$. Therefore, $\cF$ has   a unique fixed point $f \in \overline{B}_\delta(0)\subset X^s_T$  which is also the unique solution of  \eqref{perturb} in $\overline{B}_\delta(0)$. Since the smallness conditions in \eqref{smallnesscds:existence} are independent of the time $T$, we in fact obtain a global solution to \eqref{perturb}. 

Using the second inequality in \eqref{F(f):bound}, \eqref{cd:etaphi:1}, and \eqref{cd:deltaf0:1}, we find
\begin{align*}
    \|f\|_{X^s_T} &\leq C_1 \left\{\|f_0\|_{H^s} + \|f\|^2_{X^s_T} + \|f\|_{X^s_T}\|\eta_* + \phi\|_{H^{s+\frac12}}\right\} \\
    &\leq C_1 \|f_0\|_{H^s} + \frac13 \|f\|_{X^s_T} + \frac13 \|f\|_{X^s_T},
\end{align*}
thereby obtaining the global-in-time bound \eqref{globalbound:f}. 

{\it Step 2.} Exponential decay of $f$. We note that \eqref{eq:f:2} is of the form \eqref{eq:geps} with $\eps=0$ and $F^\eps=N(f)$. Therefore, we can apply the estimate \eqref{eest:geps} to have 
 \bq
 E'(t) \leq -c'_0 \|f\|^2_{H^{s+\frac12}}+C_3\| N(f)\|^2_{H^{s-\mez}},
 \eq
where $c'_0 =c'_0(\|\eta_*\|_{W^{1, \infty}})$, $C_3=C_3(\|\eta_*\|_{W^{s+2, \infty}})$, and  the energy $E(t)$ is defined by \eqref{energy:geps}, i.e.
\[
    E(t) = \frac12 A \|f\|^2_{L^2} + \frac12  \sum_{|\alpha| = s}\|\partial^\alpha f\|^2_{L^2},\quad A=A(\|\eta_*\|_{W^{s+2, \infty}}).
\]
Invoking the estimate \eqref{est:N(f):0} for $N(f)$ and recalling that  $\| f\|_{L^\infty((0, \infty); H^s)}\le \delta<1$, we deduce that
\begin{align*}
    E'(t) \leq -c'_0\|f\|^2_{H^{s+\frac12}} + C_4 \|f\|^2_{H^s}\|f\|^2_{H^{s+\frac12}} + C_4 \|f\|^2_{H^{s+\frac12}}\|\eta_*+\phi\|^2_{H^{s+\frac12}},
\end{align*}
where $C_4=C_4(\|\eta_*\|_{W^{s+2, \infty}}, \| \phi\|_{H^s})$. Therefore, if we assume in addition to \eqref{smallnesscds:existence} that
\bq\label{cd:delta:3}
\|\eta_*+\phi\|^2_{H^{s+\frac12}}<\frac{c'_0}{3C_4},\quad\delta^2<\frac{c'_0}{3C_4},
\eq
then
\[
E'(t)\le -\frac{c'_0}{3}\|f\|^2_{H^{s+\frac12}}\le  -\frac{c'_0}{3}\|f\|^2_{H^{s}}.
\]
Consequently $E(t)$ decays exponentially, and hence so does $\|f(t)\|_{H^s}$. In view of \eqref{smallnesscds:existence} and \eqref{cd:delta:3}, we conclude the proof of Theorem \ref{theo:stability}  with 
\bq\label{def:om}
\om=\min\left(\frac{1}{3C_1}, \frac{1}{12C_2}, \sqrt{\frac{c'_0}{3C_4}}\right),\quad C_*=3C_1=3C_1.
\eq
 We recall that $C_1$, $C_2$ and $C_4$ are functions of $(\| \eta_*\|_{W^{s+2, \infty}}, \|\phi\|_{H^s})$, and $c_0'$ is a function of $\| \eta_*\|_{W^{1, \infty}}$. In the above analysis, $c_0'$ can be replaced by any smaller positive constant. Thus, we can assume without loss of generality that $c_0'$ is a nonincreasing function, so that $c_0'(\| \eta_*\|_{W^{1, \infty}})\ge c_0'(\| \eta_*\|_{W^{s+2, \infty}})$. Consequently, we can replace $\om$ in \eqref{def:om} with a smaller constant that depends only on $(\| \eta\|_{W^{s+2, \infty}}, \|\phi\|_{H^s})$. 
\end{proof}

\vspace{.1in}
{\noindent{\bf{Acknowledgment.}}  The authors  were partially supported by NSF grant DMS-2205734. 

}

\end{document}